\newcommand\R{\ensuremath{\mathbb{R}}}
\newcommand\Z{\ensuremath{\mathbb{Z}}}
\newcommand\esf{\ensuremath{\mathbb{S}}}
\newcommand\g{\ensuremath{\mathtt{g}}}
\newcommand\tg{\ensuremath{\widetilde{\mathtt{g}}}}
\newcommand\hg{\ensuremath{\widehat{\mathtt{g}}}}
\renewcommand\ker{\operatorname{Ker}}
\begin{document}
\mainmatter              
\title{Compact plane waves \\ with parallel Weyl curvature}
\titlerunning{Compact plane waves with parallel Weyl curvature}  
%
\author{Ivo Terek}
\authorrunning{Ivo Terek} 
%
\tocauthor{Ivo Terek}

\institute{The Ohio State University, Columbus OH 43210, USA  \\[1em] Current address: Williams College, Williamstown MA 01267, USA\\
\email{it3@williams.edu}.}

\maketitle              

\begin{abstract}
This is an exposition of recent results --- obtained in joint work with Andrzej Derdzinski --- on \emph{essentially conformally symmetric} (ECS) manifolds, that is, those pseudo-Riemannian manifolds with parallel Weyl curvature which are not locally symmetric or conformally flat. In the 1970s, Roter proved that while Riemannian ECS manifolds do not exist, pseudo-Riemannian ones do exist in all dimensions $n\geq 4$, and realize all indefinite metric signatures. The local structure of ECS manifolds is known, and every ECS manifold carries a distinguished null parallel distribution $\mathcal{D}$, whose rank is always equal to $1$ or $2$. We review basic facts about ECS manifolds, briefly discuss the construction of compact examples, and outline the proof of a topological structure result: outside of the locally homogeneous case and up to a double covering, every compact rank-one ECS manifold is a bundle over $\esf^1$ whose fibers are the leaves of $\mathcal{D}^\perp$. Finally, we mention some classification results for compact rank-one ECS manifolds.
\keywords{Parallel Weyl tensor, conformally symmetric manifolds, compact pseudo-Riemannian manifolds}
\end{abstract}

\section{Introduction and History}

By an \emph{essentially conformally symmetric} (ECS) manifold \cite{DR_padge}, we mean a pseudo-Riemannian manifold $(M,\g)$ of dimension $n\geq 4$ with parallel Weyl curvature ($\nabla W = 0$), which is not locally symmetric ($\nabla R\neq 0$) or conformally flat ($W\neq 0$). The reason why the latter conditions are excluded from the definition of an ECS manifold is because both $\nabla R=0$ and $W=0$ trivially imply that $\nabla W=0$. As Roter showed in 1977, this is not really something artificial to do: a \emph{Riemannian} manifold with parallel Weyl curvature must necessarily be locally symmetric or conformally flat \cite[Theorem 2]{DR_tensor}. Roter also showed that ECS manifolds do exist in all dimensions $n\geq 4$, and realizing all possible indefinite metric signatures \hbox{\cite[Corollary 3]{R_colloq}}.

The condition $\nabla W = 0$, being one of the natural differential conditions to be imposed in the irreducible components of the curvature tensor (in the sense of Besse, cf. \cite[Chapter 16]{Besse}), has been investigated and used by a number of authors --- these include Cahen and Kerbrat \cite{Cahen-Kerbrat}, Mantica and Suh \cite{Mantica-Suh}, Suh et al. \cite{Suh}, Hotlo\'{s} \cite{Hotlos}, Deszcz et al. \cite{Deszcz2,Deszcz3}, among others. Some techniques used in the study of ECS manifolds also have been applied to obtain results for more general classes of manifolds, as shown, for instance, in \cite{AG_jgp}, \cite{CGSV}, \cite{CGVV}, \cite{Deszcz1}, and \cite{T_pams}.

ECS manifolds are naturally sorted into two types. Namely, we define the \emph{rank} of $(M,\g)$ \cite{DT_pems} to be the rank of its \emph{Olszak distribution} $\mathcal{D}$, which is given by
\begin{equation}\label{eqn:Olszak}
  \mathcal{D}_x = \{v\in T_xM \mid \g_x(v,\cdot) \wedge W_x(v',v'',\cdot,\cdot) = 0,\mbox{ for all }v',v''\in T_xM\}
\end{equation}for all $x\in M$. The distribution $\mathcal{D}$ was originally introduced by Olszak \cite{O_zesz} for the study of conformally recurrent manifolds but, in the ECS case, it turns out that $\mathcal{D}$ is a null parallel distribution whose rank is always equal to $1$ or $2$. For this reason, one speaks of \emph{rank-one/rank-two ECS manifolds}.

Along with $\mathcal{D}$, the quotient vector bundle $\mathcal{D}^\perp/\,\mathcal{D}$ over $M$ also plays a central role in the study of ECS manifolds. Namely,
\begin{equation}\label{eqn:flat_D_Dquot}
  \parbox{.76\textwidth}{the Levi-Civita connection of $\,(M,\g)\,$ induces connections in both $\,\mathcal{D}\,$ and $\,\mathcal{D}^\perp/\,\mathcal{D}\,$, with the latter connection being always flat, and the former one being flat when $(M,\g)$ is of rank one,}
\end{equation}
cf. \cite[Lemma 2.2]{DR_bbms}. There, it is also shown that the Ricci endomorphism of $(M,\g)$ is $\mathcal{D}$-valued, regardless of the rank of $(M,\g)$.

Every Lorentzian ECS manifold $(M,\g)$, unable to carry a null distribution of rank two, must be of rank one. Flatness of the connections induced in $\mathcal{D}$ and $\mathcal{D}^\perp/\,\mathcal{D}$ then means that, up to a double isometric covering (taken to globally trivialize $\mathcal{D}$ if needed), $(M,\g)$ must be a \emph{pp-wave spacetime}. Such spacetimes, introduced by Ehlers and Kundt in the 1960s \cite{EK}, remain an active topic of research: see \cite{CFH,Flores-Sanchez}, \cite{AR_lmp,Lejmi}, \cite{Araneda,Roche}, and also the recent preprints \cite{HIMZ_arxiv,HMZ_arxiv}.  While solving the partial differential equation $\nabla W =0$ in Brinkmann coordinates does lead to a local classification result for Lorentzian ECS manifolds, the local structure of ECS manifolds of either rank has been determined in full generality by Derdzinski and Roter \cite{DR_bbms}. 

In view of the above, our attention will be mostly focused on global features of rank-one ECS manifolds; little is known about the rank-two case beyond their local structure and Theorem \ref{teo:exist_dil_examples} in Section \ref{sec:cpct_ex}. One very natural question is whether compact manifolds can admit ECS metrics or not. The first examples of compact ECS manifolds were provided by Derdzinski and Roter in 2010 \cite[Theorem 1.1]{DR_agag}, in all dimensions of the form $n=3k+2$ with $k\geq 1$, and they realize all indefinite metric signatures. Such examples, arising as isometric quotients of what we call here \emph{standard ECS plane waves} (see Section \ref{sec:models}), also present distinct geometric and topological features: they are all of rank one, geodesically complete, not locally homogeneous, and diffeomorphic to total spaces of torus bundles over $\esf^1$. The completeness conclusion in Lorentzian signature is not surprising: as shown by Leistner and Schliebner, compact pp-wave spacetimes are always complete \cite{LS_mann}; more generally, Mehidi and Zeghib have shown that a compact Lorentzian manifold carrying a null parallel vector field must be complete \cite{MZ_arxiv}.

\pagebreak

Some further questions arise:
\begin{enumerate}[(i)]
\item Are there compact ECS manifolds of dimensions $n\geq 5$ other than those of the form $n=3k+2$ with $k\geq 1$?
\item Are there compact four-dimensional ECS manifolds?
\item Must compact ECS manifolds be geodesically complete?
\item Can a compact ECS manifold be locally homogeneous?
\item Must compact rank-one ECS manifolds be bundles over $\esf^1$?
\end{enumerate}

Our recent progress in the topic consists in providing answers and partial answers to the above questions. In more detail:
\begin{enumerate}[(i)]
\item Yes. There are compact rank-one ECS manifolds of all dimensions $n\geq 5$, realizing all indefinite metric signatures \cite[Theorem A]{DT_mofm}.
\item There are no four-dimensional compact rank-one ECS manifolds \cite[Corollary F]{DT_agag}. However, it is still an open question whether there are four-di\-men\-si\-o\-nal compact rank-two ECS manifolds.
\item No. There are geodesically incomplete compact rank-two ECS manifolds of all \emph{odd} dimensions $n\geq 5$, with semi-neutral metric signature \cite[Theorem 6.1]{DT_advg}. However, in a \emph{generic} sense, compact rank-one ECS manifolds are complete \cite[Theorem E]{DT_agag}. (We elaborate on the meaning of `genericity' in Section \ref{sec:class_res}.)
\item Yes. The examples from \cite[Theorem 6.1]{DT_advg} mentioned in (iii) happen to be all locally homogeneous, although we also present incomplete non locally-homogeneous ones in \cite[Theorem B.1]{DT_advg}.
\item Outside of the locally homogeneous case and up to a double isometric covering, yes \cite[Theorem A]{DT_pems}. In addition, the fibers of the resulting bundle projection $M \to \esf^1$ are the leaves of $\mathcal{D}^\perp$. It is an open question whether these conclusions can be extended to the locally homogeneous case as well without the additional assumption that $\mathcal{D}^\perp$ has a compact leaf.
\end{enumerate}

\medskip

\noindent {\bf Outline:} In Section \ref{subsec:td-dich}, we review the \emph{translational-dilational} terminology \cite{DT_pm}, used when referring to the two distinct classes of compact ECS manifolds mentioned in the answers to (i) and (iii)-(iv) above.

In both \cite{DT_mofm} and \cite{DT_advg}, the resulting compact ECS manifolds again arise as isometric quotients of standard ECS plane waves, which we review in Section \ref{sec:models}. In particular, we also describe the isometry group of such manifolds.

Next, in Section \ref{sec:cpct_ex}, we introduce a family $\{{\rm G}(\sigma)\}_{\sigma \in {\rm S}}$ of subgroups of the isometry group of a standard ECS plane wave $(\widehat{M},\hg)$, and present in Theorem \ref{teo:criterion_compact} a criterion for the existence of subgroups $\Gamma$ of a given ${\rm G}(\sigma)$ for which the quotient $\widehat{M}/\Gamma$ is smooth and compact. Using it, we then outline the constructions given in \cite{DT_mofm} and \cite{DT_advg}. 

Section \ref{eqn:bundle_structure} is devoted to discussing the proof of the topological structure result mentioned in (v) above \cite{DT_pems}, and the concepts used in it; special emphasis is given to the dichotomy property \cite{D_rjm} for a codimension-one foliation $\mathcal{V}$ in a smooth manifold $M$, which is established and applied for $\mathcal{V} = \mathcal{D}^\perp$.

Finally, we conclude the presentation with Section \ref{sec:class_res}, elaborating on the notion of \emph{genericity} mentioned in the answer to (iii), and stating some classification results for generic compact rank-one ECS manifolds \cite{DT_pm,DT_agag}. Such results culminate in the answer to (ii).

\medskip

\noindent {\bf Acknowledgements:} It is a pleasure to thank Andrzej Derdzinski and Paolo Piccione for their very helpful comments on earlier drafts of this text.

\section{The translational-dilational holonomy dichotomy}\label{subsec:td-dich} The new compact ECS examples presented in \cite[Theorem A]{DT_mofm} and \cite[Theorems 6.1 and B.1]{DT_advg}, discussed in Section \ref{sec:cpct_ex}, are sorted out into two classes. Here, we discuss them in more generality: a \emph{weakly pp-wave manifold} $(M,\g)$, that is, a pseudo-Riemannian manifold carrying a null parallel distribution $\mathcal{P}$ of rank one satisfying \eqref{eqn:flat_D_Dquot} (for $\mathcal{D}$ replaced with $\mathcal{P}$), will be called \emph{translational} or \emph{dilational}, according to whether the ho\-lo\-no\-my group of the flat connection induced in $\mathcal{P}$ is finite or infinite. Every \emph{Ricci-recurrent} ECS manifold --- that is, those for with $\nabla {\rm Ric} = \theta\otimes {\rm Ric}$ for some $1$-form $\theta$, outside of the zero-set of ${\rm Ric}$ --- is weakly pp-wave, with $\mathcal{P}$ being the Olszak distribution $\mathcal{D}$ or the image of the Ricci endomorphism, according to whether its rank is one or two; this is a direct consequence of Roter's local structure theorem \cite{R_colloq}, and we describe the local model in Section \ref{sec:models}.

To understand the reason for the names `translational' and `dilational' more concretely, let $\pi\colon \widetilde{M} \to M$ be the universal covering of $M$ and set $\tg = \pi^*\g$. Then $(\widetilde{M},\tg)$ is also weakly pp-wave, with a distribution $\widetilde{\mathcal{P}}$ projecting onto $\mathcal{P}$ and also satisfying \eqref{eqn:flat_D_Dquot}. As $\widetilde{M}$ is simply connected, we may fix \vspace{-.5em}
\begin{equation}\label{t}\vspace{-.5em}
  \parbox{.75\textwidth}{a smooth function $t\colon \widetilde{M}\to I$, surjective onto some open interval $I\subseteq \R$, whose gradient is null, parallel, and spans $\widetilde{\mathcal{P}}$.}
\end{equation}
Writing $M = \widetilde{M}/\Gamma$ for some group $\Gamma \cong \pi_1(M)$ acting on $(\widetilde{M},\tg)$ freely and properly discontinuously by deck isometries, we see that for every $\gamma \in \Gamma$ there is $(q,p) \in {\rm Aff}(\R)$ such that $t\circ \gamma = qt+p$. This gives rise to two group homomorphisms
\begin{equation}\label{eqn:two_homs}
{\rm i})~ \Gamma \ni \gamma \mapsto (q,p)\in {\rm Aff}(\R)\quad\mbox{and}\quad {\rm ii}) ~\Gamma \ni \gamma \mapsto q\in \R\smallsetminus \{0\}.
\end{equation}
As the image of (\ref{eqn:two_homs}-ii) equals the holonomy group of the flat connection induced in $\mathcal{P}$, cf. \cite[Lemma 3.1]{DT_pm} (whose proof does not use compactness of $M$ or the parallel Weyl curvature condition), the two possibilities for $(M,\g)$ read: \vspace{-1em}
\begin{equation}\label{eqn:dich-q}
\vspace{-1em}  \parbox{.65\textwidth}{
    \begin{enumerate}[(i)]
    \item translational: $|q|=1$ for every \hbox{$\gamma \in \Gamma$}, and
    \item dilational: $\,|q|\neq 1\,$ for some $\gamma \in \Gamma$.
    \end{enumerate}
}\end{equation}
On the other hand, the image of (\ref{eqn:two_homs}-i) consists (up to an index-two subgroup) only of translations, or of dilations from a same fixed point, according to whether $(M,\g)$ is translational or dilational.

There are two last relevant consequences of \eqref{eqn:dich-q} for ECS manifolds.

\begin{proposition}
  Let $(M,\g)$ be a Ricci-recurrent ECS manifold. Then:
  \begin{enumerate}[\normalfont(i)]
  \item if $(M,\g)$ is Lorentzian, it is translational.
  \item if $(M,\g)$ is translational and compact, it cannot be locally homogeneous.
  \end{enumerate}
\end{proposition}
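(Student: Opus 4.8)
The plan is to analyze the two homomorphisms in \eqref{eqn:two_homs} and exploit the structural dichotomy in \eqref{eqn:dich-q}, using the function $t$ from \eqref{t} as the main geometric handle.

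For part (i), recall that a Lorentzian manifold cannot carry a null distribution of rank two, so a Lorentzian Ricci-recurrent ECS manifold is forced to have $\mathcal{P}$ of rank one, and \eqref{eqn:flat_D_Dquot} applies with $\mathcal{D}$ replaced by $\mathcal{P}$. The first step is to recall that the image of \eqref{eqn:two_homs}-ii coincides with the holonomy group of the flat connection induced in $\mathcal{P}$, as quoted from \cite[Lemma 3.1]{DT_pm}. The crux is then to show this holonomy group must be finite (in fact $\{\pm 1\}$) in the Lorentzian case. Here I would use that $\mathcal{P}$ is a null \emph{parallel} line field inside a \emph{Lorentzian} metric: the induced connection on $\mathcal{P}$ preserves no positive-definite fibre metric a priori, but one can pair $\mathcal{P}$ with a complementary null line (a ``screen'' construction) so that the metric identifies the holonomy representation on $\mathcal{P}$ with the inverse of the one on that complementary line; combined with the fact that $\mathcal{D}^\perp/\mathcal{D}$ carries a \emph{definite} parallel metric in Lorentzian signature (the transverse directions are spacelike), the structure equations force $q$-factors to have absolute value $1$. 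Concretely, in Brinkmann-type coordinates adapted to the pp-wave, a deck transformation rescaling $t$ by $q$ must rescale the null partner coordinate by $q^{-1}$ and act on the spacelike transverse coordinates by an orthogonal-times-$|q|^{1/2}$-type map; compatibility with the transverse metric being definite and $\Gamma$-invariant forces $|q|=1$. Thus every $\gamma\in\Gamma$ has $|q|=1$, which is exactly translationality by \eqref{eqn:dich-q}.

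For part (ii), suppose for contradiction that $(M,\g)$ is translational, compact, and locally homogeneous. The strategy is to derive a contradiction from the existence of the function $t\colon\widetilde M\to I$ together with translationality. Since $(M,\g)$ is translational, every $\gamma\in\Gamma$ satisfies $t\circ\gamma = t+p(\gamma)$ for some $p(\gamma)\in\R$, so the homomorphism \eqref{eqn:two_homs}-i lands in the translation subgroup of ${\rm Aff}(\R)$; hence $t$ descends to a well-defined map $M\to \R/P$ where $P = \{p(\gamma) : \gamma\in\Gamma\}$ is the image subgroup of $\R$. Because $M$ is compact and $t$ is a submersion with null gradient spanning $\mathcal{P}$, the subgroup $P$ must be a lattice $c\Z$ in $\R$ (it is the image of the fundamental group of a compact manifold under a map to $\R$ with closed discrete fibres structure — if $P$ were dense the fibres of the descended map would not be closed, contradicting compactness/properness), so $t$ induces a submersion $\bar t\colon M\to \esf^1$. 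Now invoke local homogeneity: it forces the Ricci-recurrence $1$-form and the associated scalar invariants of the ECS structure to be constrained, but — and this is the key point — on a compact locally homogeneous pseudo-Riemannian manifold every scalar curvature invariant is constant, whereas the ECS local model (the standard plane-wave model reviewed in Section \ref{sec:models}) produces a non-constant scalar invariant along the $t$-direction unless the defining data are themselves constant, which would make the metric locally symmetric, contradicting the ECS hypothesis $\nabla R\neq 0$. Pushing this through: local homogeneity plus $\nabla W = 0$ with $W\neq 0$ forces, via Roter's local structure theorem, that the function controlling the plane-wave profile has constant ``logarithmic derivative'' type behaviour that is incompatible with the profile being periodic in $t$ (which is what $\bar t\colon M\to\esf^1$ would require for $\Gamma$ to act cocompactly by translations). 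Hence no such compact translational locally homogeneous ECS manifold exists.

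The main obstacle I anticipate is part (ii): making rigorous the claim that translationality (periodicity of the profile in $t$) is incompatible with local homogeneity. One must carefully extract from Roter's local structure theorem \cite{R_colloq} the precise ODE satisfied by the plane-wave profile function in the locally homogeneous case — it should be something like an exponential or a solution with strictly monotone behaviour in $t$ — and then argue that such a function admits no nontrivial period, so the translation subgroup $P$ acting by $t\mapsto t+p$ cannot preserve the metric unless $p=0$, forcing $P$ trivial and hence $M$ noncompact. A secondary technical point is justifying that $P$ is discrete (a lattice) rather than dense; this uses compactness of $M$ essentially, via properness of the $\Gamma$-action and the fact that $t$ is a proper-fibred submersion, and should be handled before the main ODE argument.
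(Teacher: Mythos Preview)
Your argument for part (i) has a genuine gap. The claim that a deck transformation with $t\mapsto qt$ must act on the transverse spacelike coordinates by an ``orthogonal-times-$|q|^{1/2}$-type map'' is false: in the Brinkmann model $\kappa\,{\rm d}t^2+{\rm d}t\,{\rm d}s+\langle\cdot,\cdot\rangle$, an isometry with $t\mapsto qt+p$ and $s\mapsto q^{-1}s+\cdots$ acts on $v$ by a genuine element $C\in{\rm O}(V,\langle\cdot,\cdot\rangle)$ with no $q$-scaling, so definiteness of the transverse metric alone imposes no constraint on $|q|$. The actual obstruction, which you miss entirely, comes from the Weyl tensor: the operator $A$ in \eqref{eqn:A} must satisfy $CAC^{-1}=q^2A$ for every deck transformation. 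If $|q|\neq 1$ for some $\gamma$, then $A$ is conjugate to a nontrivial scalar multiple of itself and hence nilpotent; but when $(M,\g)$ is Lorentzian, $(V,\langle\cdot,\cdot\rangle)$ is Euclidean, the only self-adjoint nilpotent operator is $A=0$, and $A=0$ forces $W=0$, contradicting the ECS hypothesis. That is the paper's argument, and some version of this Weyl-operator step is unavoidable.

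For part (ii) your strategy is correct in outline and matches the paper's, but two of your intermediate claims are either wrong or unusable. First, the appeal to ``every scalar curvature invariant is constant'' on a locally homogeneous manifold is useless here: plane waves typically have all polynomial curvature scalars vanishing, so constancy of scalars gives no information. Second, you never identify the relevant non-scalar invariant. The paper works instead with the function $f$ defined by ${\rm Ric}=(2-n)f(t)\,{\rm d}t\otimes{\rm d}t$ on $\widetilde M$, derives the invariance law $q^2f(qt+p)=f(t)$, and then uses that local homogeneity forces (after citing \cite[Theorem 7.3]{DT_pems}) the explicit form $f(t)=c/t^2$. Translationality ($q^2=1$) makes $f$ itself $\Gamma$-invariant, so $f$ descends to a continuous function on the compact quotient $M$; but $t\mapsto c/t^2$ is unbounded, a contradiction. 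Your guess that the profile should be ``exponential or strictly monotone'' is close in spirit but not the right form, and without the precise $c/t^2$ (equivalently, $(|f|^{-1/2})^{\boldsymbol\cdot\boldsymbol\cdot}=0$) the incompatibility with periodicity cannot be concluded.
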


\begin{proof}
  Consider first (i), with $\mathcal{P} = \mathcal{D}$, as well as the vector space $V$ of parallel sections of $\widetilde{\mathcal{D}}^\perp/\widetilde{\mathcal{D}}$. Note that $\dim V = n-2$, and that $\tg$ induces a pseudo-Euclidean inner product $\langle\cdot,\cdot\rangle$ on $V$. If $Z$ is any vector field on $\widetilde{M}$ such that ${\rm d}t(Z)=1$, and $W$ is the Weyl tensor of $(\widetilde{M},\tg)$, the operator
  \begin{equation}\label{eqn:A}
\parbox{.65\textwidth}{$A\colon V\to V$, given by $A(X+\widetilde{\mathcal{D}}) = W(Z,X)Z+ \widetilde{\mathcal{D}},$}
\end{equation}is well-defined, traceless, self-adjoint, and independent of the choice of $Z$; see \cite[Section 4]{DR_jgp} and \cite[Section 5]{DT_agag}. The derivative of any element $\gamma\in \Gamma$, regarded as a deck isometry of $(\widetilde{M},\tg)$, induces an isometry $C$ of $(V,\langle\cdot,\cdot\rangle)$ with $CAC^{-1} = q^2A$, where $q$ is the (\ref{eqn:two_homs}-ii)-image of $\gamma$. When $(M,\g)$ is dilational, $A$ must necessarily be nilpotent, being conjugate to a nontrivial multiple of itself, cf. (\ref{eqn:dich-q}-ii). On the other hand, $(V,\langle\cdot,\cdot\rangle)$ is Euclidean when $(M,\g)$ is Lorentzian, and in this case the only self-adjoint nilpotent operator is $A=0$. As $W=0$ whenever $A=0$, (i) must hold.

To address (ii), we first observe that
\begin{equation}\label{eqn:Ric_f}
  \parbox{.66\textwidth}{${\rm Ric} = (2-n)f(t)\,{\rm d}t\otimes {\rm d}t$ in $\widetilde{M}$, for a suitable smooth function $f\colon \widetilde{M} \to \R$, which is locally a function of $t$.}
\end{equation}
Here, $f(t)$ stands for the composition $f\circ t$. Indeed, the Ricci endomorphism of $(\widetilde{M},\tg)$ is $\widetilde{\mathcal{D}}$-valued and self-adjoint, while the relation ${\rm d}^\nabla{\rm Ric} = 0$ (implied by $\nabla W = 0$) trivially leads to ${\rm d}f\wedge {\rm d}t=0$. In addition, we have
\begin{equation}\label{eqn:Ric-inv}
  \parbox{.88\textwidth}{$q^2f(qt+p) = f(t)$ for all $\gamma \in \Gamma$, with $(q,p)$ being the (\ref{eqn:two_homs}-ii)-image of $\gamma$,}
\end{equation}due to $\Gamma$-invariance of ${\rm Ric}$. Thus, when $(M,\g)$ is locally homogeneous, $|f|^{1/2}{\rm d}t$ is a closed and $\Gamma$-invariant $1$-form without zeros. The existence of such a $1$-form, in turn, implies that the level sets of $t$ are connected and coincide with the leaves of $\widetilde{\mathcal{D}}^\perp$, making $f$ a global function of $t$ \cite[Lemma 7.2]{DT_pems}. As $f$ is nowhere-vanishing and $(|f|^{-1/2})^{\boldsymbol\cdot\boldsymbol\cdot}=0$ by \cite[Theorem 7.3]{DT_pems}, we may assume that $f(t) = f(1)/t^2$ after replacing $t$ with an affine function of $t$ if necessary. On the other hand, if at the same time $(M,\g)$ is translational, $f$ is $\Gamma$-invariant by \eqref{eqn:Ric-inv} and (\ref{eqn:dich-q}-i), and thus survives on the compact quotient $M = \widetilde{M}/\Gamma$ as an unbounded and continuous function. This contradiction proves (ii).
\end{proof}

\section{Standard ECS plane waves}\label{sec:models}

\subsection{Setup and terminology}

In this section, following \cite{R_colloq}, we fix the following data: an integer $n\geq 4$, a pseudo-Euclidean vector space $(V,\langle\cdot,\cdot\rangle)$ of dimension $n-2$, a nonzero traceless and self-adjoint operator $A\colon V\to V$, an open interval $I\subseteq \R$, and a nonconstant smooth function $f\colon I\to \R$. With this in place, we consider the simply connected $n$-dimensional pseudo-Riemannian manifold
\begin{equation}\label{eqn:model}
  (\widehat{M}, \hg) = (I\times \R \times V,\, \kappa\,{\rm d}t^2 + {\rm d}t\,{\rm d}s + \langle\cdot,\cdot\rangle),
\end{equation}where $\kappa\colon I\times \R\times V\to \R$ is given by $\kappa(t,s,v) = f(t)\langle v,v\rangle + \langle Av,v\rangle$. Here, ${\rm d}t$, ${\rm d}s$, and $\langle\cdot,\cdot\rangle$ are identified with their pullbacks to $\widehat{M}$. Note that the null coordinate vector field $\partial_s$ is parallel, due to its being obviously a Killing vector field and corresponding under $\hg$ to the closed $1$-form ${\rm d}t/2$. We will repeatedly refer to
\begin{equation}\label{eqn:distr_P}
  \parbox{.77\textwidth}{the null parallel rank-one distribution $\mathcal{P}$ on $\widehat{M}$ spanned by $\partial_s$.}
\end{equation}
A routine computation also shows that
\begin{equation}\label{eqn:completeness_model}
  \parbox{.63\textwidth}{$(\widehat{M},\hg)$ is geodesically complete if and only if $I=\R$.}
\end{equation}
In addition, it is well-known that $(\widehat{M},\hg)$ is an ECS manifold, and that every point in a rank-one ECS manifold has a neighborhood isometric to an open subset of a suitable $(\widehat{M},\hg)$, cf. \cite[Theorem 4.1]{DR_bbms}. As $(t,s,v)$ are global Brinkmann coordinates for $(\widehat{M},\hg)$ and the function $\kappa$ is quadratic in the variable $v$, we will call $(\widehat{M},\hg)$ a \emph{standard ECS plane wave}. We emphasize that our standard ECS plane waves, which obviously include the Lorentzian ones, may have arbitrary indefinite metric signature.

In \cite{DT_pm} and \cite{DT_agag}, such $(\widehat{M},\hg)$ were called `rank-one ECS models.' To justify the change in terminology here, we observe that $(\widehat{M},\hg)$ does not necessarily have rank one as claimed in \cite{DR_bbms}, but instead
\begin{equation}\label{eqn:rank-correction}
  \parbox{.63\textwidth}{the rank of $(\widehat{M},\hg)$ equals $1$ or $2$ according to whether ${\rm rank}(A) >1$ or ${\rm rank}(A)=1$, respectively,}
\end{equation}
cf. \cite{DT_corr}. Indeed, the only possibly-nonzero components of the Weyl operator of $(\widehat{M},\hg)$ acting on bivectors are given by $W(\partial_t \wedge \partial_j) = 2\partial_s \wedge A\partial_j$, for all indices $j=1,\ldots, n-2$, where $(x^1,\ldots, x^{n-2})$ are arbitrary linear coordinates in $V$ \cite[p. 93]{R_colloq}. We may now compute the Olszak distribution $\mathcal{D}$ of $(\widehat{M},\hg)$, using \eqref{eqn:Olszak}. If $u=a\partial_t+b\partial_s + u_0$ lies in $\mathcal{D}$, with $a,b\in \R$ and $u_0 \in V$, the conditions $u\wedge \partial_s\wedge A\partial_j=0$ directly imply that $a=0$ and $u_0\wedge \partial_s\wedge w = 0$ for all $w\in {\rm Im}(A)$, so that $u_0 = 0$ whenever ${\rm rank}(A)>1$, while $b$ remains free; when ${\rm rank}(A) = 1$, we instead obtain that $u_0 \in {\rm Im}(A)$. Thus, for $\mathcal{P}$ given in \eqref{eqn:distr_P}, we have $\mathcal{D} = \mathcal{P}$ when ${\rm rank}(A)>1$, and $\mathcal{D} = \mathcal{P}\oplus {\rm Im}(A)$ when ${\rm rank}(A)=1$, finally proving \eqref{eqn:rank-correction}.

The standard ECS plane waves of the form \eqref{eqn:model} with ${\rm rank}(A)=1$ form a narrow --- but relevant --- class of rank-two ECS manifolds; see \cite{DR_tohoku} for more details on the local structure of rank-two ECS manifolds.

\subsection{The full isometry group}\label{sec:full_iso}

In order to describe the isometry group of an $n$-dimensional standard ECS plane wave $(\widehat{M},\hg)$ defined as in \eqref{eqn:model}, two ingredients are needed.

The first one is the subgroup $\mathrm{S}$ of the direct product ${\rm Aff}(\R)\times {\rm O}(V,\langle\cdot,\cdot\rangle)$, consisting of all triples $\sigma = (q,p,C)$ having $CAC^{-1}=q^2A$, $qt+p\in I$ and $f(t)=q^2f(qt+p)$ for all $t\in I$. The second one is
\begin{equation}\label{eqn:symp}
  \parbox{.86\textwidth}{the $2(n-2)$-dimensional symplectic vector space $(\mathcal{E},\Omega)$ of solutions $u\colon I\to V$ of the ordinary differential equation $\ddot{u}(t) = f(t)u(t) + Au(t)$.}
\end{equation}
Here, $\Omega$ is defined by $\Omega(u,w) = \langle \dot{u},w\rangle - \langle u,\dot{w}\rangle$. Associated with $(\mathcal{E},\Omega)$ is its Heisenberg group: the Cartesian product ${\rm H}= \R\times \mathcal{E}$ equipped with the group operation $(r,u)(\hat{r},\hat{u}) = (r+\hat{r} -\Omega(u,\hat{u}), u+\hat{u})$.

All three quantities $q$, $(q,p)$, and $C$ depend homomorphically on $\sigma$. Therefore, we have left actions of $\mathrm{S}$ on $I$, $\R$, and ${\rm C}^\infty(I,V)$, given by
\begin{equation}
  \label{eq:H_actions}
 {\rm i)}~  \sigma t = qt+p, \quad {\rm ii})~ \sigma s = q^{-1}s, \quad\mbox{and}\quad {\rm iii)}~(\sigma u)(t) = Cu(q^{-1}(t-p)),
\end{equation}
respectively. As (\ref{eq:H_actions}-i) and (\ref{eq:H_actions}-ii) are at odds when $I=\R$, we adopt only (\ref{eq:H_actions}-i) and explicitly write $q^{-1}s$ for (\ref{eq:H_actions}-ii), understanding that $q$ is the first component of $\sigma$. Note that as a particular case of (\ref{eq:H_actions}-iii), ${\rm S}$ acts on $V$ via $\sigma v = Cv$, and these actions are compatible in the sense that $(\sigma u)(t) = \sigma(u(\sigma^{-1}t))$. Finally, as a routine computation shows, (\ref{eq:H_actions}-iii) also restricts to an action of ${\rm S}$ on $\mathcal{E}$ for which $\sigma^*\Omega = q^{-1}\Omega$ for every $\sigma\in {\rm S}$, whenever $\sigma$ is regarded as a linear operator $\sigma\colon \mathcal{E}\to \mathcal{E}$.

The description of ${\rm Iso}(\widehat{M},\hg)$ given below as a semidirect product was already known to Schliebner, at least in Lorentzian signature, cf. the preprint \cite{S_arxiv}. In general indefinite metric signature, it can be found in \cite[Theorem 6.1]{DT_agag}:

\begin{theorem}\label{lem:full_iso_model}
  The isometry group of a standard ECS plane wave $(\widehat{M},\hg)$ is isomorphic to the semidirect product \hbox{$\mathrm{S}\ltimes_\rho \mathrm{H}$}, where the structure morphism \linebreak[4]\hbox{$\rho\colon \mathrm{S}\to {\rm Aut}(\mathrm{H})$} is given by $\rho(\sigma)(r,u) = (q^{-1} r, \sigma u)$. More precisely, $\varPhi \in {\rm Iso}(\widehat{M},\hg)$ corresponds to the triple $(\sigma, r, u)$ characterized by  \begin{equation} \label{eq:iso_action}
    \varPhi(t,s,v) = \left(\sigma t,\; -\langle \dot{u}(\sigma t), 2\sigma v+u(\sigma t)\rangle+q^{-1} s+r,\;\sigma v+ u(\sigma t)\right)
  \end{equation}for all $(t,s,v)\in \widehat{M}$, while the group operation in ${\rm S}\ltimes_\rho \mathrm{H}$ reads
  \begin{equation}
    (\sigma,r,u)(\hat{\sigma},\hat{r},\hat{u}) = \big(\sigma\hat{\sigma},r+q^{-1}\hat{r} - \Omega(u, \sigma\hat{u}), u+\sigma\hat{u}\big),
  \end{equation}for all $(\sigma,r,u),(\hat{\sigma},\hat{r},\hat{u}) \in {\rm S}\ltimes_\rho \mathrm{H}$.
\end{theorem}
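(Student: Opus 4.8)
\smallskip
\noindent\emph{Sketch of the intended proof.}
The plan is to prove the two inclusions implicit in the statement separately. In one direction, I would first check by direct substitution that each triple $(\sigma,r,u)\in\mathrm{S}\ltimes_\rho\mathrm{H}$ does define a diffeomorphism $\varPhi$ of $\widehat{M}$ via \eqref{eq:iso_action} — well-definedness uses the condition $qt+p\in I$ built into $\mathrm{S}$ — that $\varPhi^*\hg=\hg$, that the assignment $(\sigma,r,u)\mapsto\varPhi$ is injective (comparing the three components of \eqref{eq:iso_action} recovers $(q,p)$, then $C$ and $u$, then $r$), and that it carries the stated group operation to composition of maps. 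Verifying $\varPhi^*\hg=\hg$ is a routine computation that consumes the ODE from \eqref{eqn:symp} satisfied by $u$ together with all three defining relations $CAC^{-1}=q^2A$, $f(t)=q^2f(qt+p)$ and $qt+p\in I$ of $\mathrm{S}$; the Heisenberg correction $-\Omega(u,\sigma\hat u)$ in the group law is forced by the fact that the ``translational'' data $(r,u)$ do not commute, with $\Omega$ measuring the defect.

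The substance lies in the converse: every isometry $\varPhi$ of $(\widehat{M},\hg)$ is of the form \eqref{eq:iso_action}. The conceptual step is that $\varPhi$ must preserve the null parallel line field $\mathcal{P}$ of \eqref{eqn:distr_P}. Indeed, a direct computation (cf.\ \eqref{eqn:Ric_f}) gives $\mathrm{Ric}=(2-n)f(t)\,\mathrm{d}t\otimes\mathrm{d}t$, so the image of the Ricci endomorphism is contained in $\mathcal{P}$, with equality precisely on the nonempty open set $\{f\circ t\neq 0\}$ (here $f$ nonconstant is used). Since $\varPhi$ preserves $\mathrm{Ric}$, it preserves that open set and, on it, maps $\mathcal{P}$ to $\mathcal{P}$; as $\varPhi_*\mathcal{P}$ and $\mathcal{P}$ are parallel distributions on the connected manifold $\widehat{M}$ agreeing on a nonempty open set, they agree everywhere. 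Now $\mathrm{d}t$ is parallel — being, up to the factor $2$, the $\hg$-image of $\partial_s$ — and spans the one-dimensional annihilator of $\mathcal{P}^\perp=\ker\mathrm{d}t$, so $\varPhi^*\mathrm{d}t=q\,\mathrm{d}t$ for a smooth function $q$, which is a nonzero constant because $\varPhi^*\mathrm{d}t$ is again parallel. Integrating, $t\circ\varPhi=qt+p$ for some $p\in\R$ (whence $qt+p\in I$ for all $t\in I$, as $\varPhi$ maps $\widehat{M}$ onto itself), and dualizing gives $\varPhi_*\partial_s=q^{-1}\partial_s$. Hence $\varPhi$ intertwines the flows of $\partial_s$ and $q^{-1}\partial_s$, and in the coordinates $(t,s,v)$ the $V$-component $P$ of $\varPhi$ is independent of $s$ while its $\R$-component is $q^{-1}s+S_0(t,v)$.

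It then remains to unwind $\varPhi^*\hg=\hg$ for $\varPhi(t,s,v)=(qt+p,\,q^{-1}s+S_0(t,v),\,P(t,v))$ by matching coefficients of $\mathrm{d}t$, $\mathrm{d}s$, $\mathrm{d}x^j$ against \eqref{eqn:model}. The $\mathrm{d}x^j\mathrm{d}x^k$-coefficients show that $v\mapsto P(t,v)$ is, for each $t$, an affine isometry of $(V,\langle\cdot,\cdot\rangle)$, so $P(t,v)=C(t)v+u_0(t)$ with $C(t)\in\mathrm{O}(V,\langle\cdot,\cdot\rangle)$; symmetry of the mixed second partials of $S_0$ extracted from the $\mathrm{d}t\,\mathrm{d}x^j$-coefficients forces $C(t)^*\dot C(t)$ to be simultaneously skew-adjoint (differentiate $C^*C=\mathrm{id}$) and self-adjoint, hence $\dot C\equiv 0$, so $C$ is constant, and those same coefficients determine $S_0$ up to a function of $t$. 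Finally the $\mathrm{d}t^2$-coefficient, expanded by degree in $v$, carries the remaining content: the degree-two part reads $q^2C^*AC=A+c\,\mathrm{id}$ for a constant $c$, whose trace vanishes because $\operatorname{tr}A=0$ and $CC^*=\mathrm{id}$, forcing $c=0$ and hence both $CAC^{-1}=q^2A$ and $f(t)=q^2f(qt+p)$ — that is, $(q,p,C)\in\mathrm{S}$; the degree-one part says precisely that the function $u$ defined by $u_0(t)=u(qt+p)$ solves $\ddot u=fu+Au$, i.e.\ $u\in\mathcal{E}$; and the degree-zero part pins $S_0$ down to the quadratic expression appearing in \eqref{eq:iso_action}, up to an additive constant $r$ (using the ODE once more). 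Reading off $\sigma=(q,p,C)$, $r$ and $u$ then identifies $\varPhi$ with $(\sigma,r,u)$ through \eqref{eq:iso_action}.

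I expect the principal obstacle to be organizational rather than conceptual: keeping the affine reparametrization $t\mapsto qt+p$ that relates the ``coordinate'' datum $u_0$ to the element $u\in\mathcal{E}$ consistent throughout, so that the powers of $q$ fall into place and the single term $-\langle\dot u(\sigma t),2\sigma v+u(\sigma t)\rangle$ really does integrate the degree-one and degree-zero equations at once. The one genuinely non-routine algebraic input is the trace argument yielding $c=0$, which is what couples the two nontrivial defining relations of $\mathrm{S}$; it is exactly the point where the hypothesis $n\geq 4$ (equivalently $\dim V\geq 2$) is needed.
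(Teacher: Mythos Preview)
Your sketch is correct and follows the standard route for computing the isometry group of a plane-wave metric. Note that the paper does not actually prove this theorem in the text: it simply defers to \cite[Theorem~4.1]{DT_pm}, which in turn explains how the computation in \cite[Theorem~2]{D_tensor} carries over, so there is nothing to compare against beyond that citation. Your choice to identify $\mathcal{P}$ via the image of the Ricci endomorphism on $\{f\circ t\neq 0\}$, rather than via the Olszak distribution $\mathcal{D}$, is the right one here: as the paper itself points out in \eqref{eqn:rank-correction}, $\mathcal{D}$ has rank two when ${\rm rank}(A)=1$, so arguing through $\mathcal{D}$ would not directly yield $\varPhi_*\mathcal{P}=\mathcal{P}$ in that case, whereas your Ricci argument does so uniformly. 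One small overstatement: the trace identity $c\cdot\dim V=0$ forcing $c=0$ needs only $\dim V\neq 0$; the hypothesis $n\geq 4$ is consumed earlier, in the setup of \eqref{eqn:model}, to allow $A$ to be simultaneously traceless and nonzero. (Also, the expression for $S_0$ in \eqref{eq:iso_action} is affine in $v$, not quadratic, but that is just a verbal slip.)
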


\begin{remark}\label{rem:identify_H}
  Whenever convenient, we identify the Heisenberg group $\mathrm{H}$ with the normal subgroup $\{(1,0,{\rm Id}_V)\}\times\mathrm{H}$ of ${\rm Iso}(\widehat{M},\hg)$, which also equals the kernel of the homomorphism ${\rm Iso}(\widehat{M},\hg)\ni(\sigma,r,u)\mapsto \sigma\in\mathrm{S}$. By normality, ${\rm H}$ is invariant under all conjugation mappings ${\rm C}_\varPhi\colon {\rm Iso}(\widehat{M},\hg) \to {\rm Iso}(\widehat{M},\hg)$. If $\varPhi = (\sigma,b,w)$, the restriction ${\rm C}_\varPhi\colon {\rm H}\to {\rm H}$ is explicitly given by ${\rm C}_\varPhi(r,u) = \big(q^{-1}r-2\Omega(w,\sigma u), \sigma u\big)$.
\end{remark}

A proof of Theorem \ref{lem:full_iso_model} is given in \cite[Theorem 4.1]{DT_pm}, where we explain exactly how the proof of \cite[Theorem 2]{D_tensor} can be carried over to our current setting.

In order to explain geometric meaning of $(\mathcal{E},\Omega)$ in an abstract rank-one ECS manifold, it will be convenient to use an explicit description of $\mathfrak{iso}(\widehat{M},\hg)$. Note that the Lie algebra $\mathfrak{s}$ of ${\rm S}$ consists of all triples $(a,b,P) \in \mathfrak{aff}(\R)\times \mathfrak{so}(V,\langle\cdot,\cdot\rangle)$ having $[P,A]=2aA$ and $2af(t)+(at+b)\dot{f}(t) = 0$ for all $t\in I$.

\begin{corollary}
The Lie algebra $\mathfrak{iso}(\widehat{M},\hg)$ of Killing vector fields on a standard ECS plane wave is isomorphic to the semidirect product $\mathfrak{s} \ltimes_{\rho_*} \mathfrak{h}$, with $\rho$ as in Theorem \ref{lem:full_iso_model}. Namely, $X\in \mathfrak{iso}(\widehat{M},\hg)$ corresponds to the triple $((a,b,P),\ell,w)$ characterized by
  \begin{equation}\label{eqn:Killing_model}
    X_{(t,s,v)} = (at+b)\partial_t|_{(t,s,v)} + (-\langle\dot{w}(t),2v\rangle-as+\ell)\partial_s|_{(t,s,v)} + (Pv+w(t)),
  \end{equation}for every $(t,s,v) \in \widehat{M}$. Furthermore, the Lie bracket of two Killing vector fields $X \sim ((a,b,P), \ell,w)$ and $\hat{X} \sim ((\hat{a},\hat{b},\hat{P}), \hat{\ell},\hat{w})$ is given by
  \begin{equation}\label{eqn:Lie_bracket_Killing_model}
    [X,\hat{X}] \sim \big((0,\hat{a}b-a\hat{b}, [\hat{P},P]),\,2\Omega(w,\hat{w}) - \hat{a}\ell+a\hat{\ell}\,,\,u\big),
  \end{equation}
where $u\in\mathcal{E}$ is defined by $u(t) = (at+b)\hat{w}^{\boldsymbol\cdot}(t) - (\hat{a}t+\hat{b})\dot{w}(t)+\hat{P}w(t)-P\hat{w}(t)$.
\end{corollary}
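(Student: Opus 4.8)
\medskip

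\noindent\textbf{Proof proposal.} The plan is to read this off from Theorem~\ref{lem:full_iso_model} by differentiation: the differential of an isomorphism of Lie groups is an isomorphism of Lie algebras, and the Lie algebra of a semidirect product $\mathrm S\ltimes_\rho\mathrm H$ is $\mathfrak s\ltimes_{\rho_*}\mathfrak h$. So the three ingredients to pin down are $\mathfrak h=\mathrm{Lie}(\mathrm H)$, the differential $\rho_*$, and the vector field on $\widehat M$ attached to a Lie-algebra element. For $\mathfrak h$: as a manifold $\mathrm H=\R\times\mathcal E$, the curve $\tau\mapsto(\tau\ell,\tau w)$ is a one-parameter subgroup (using $\Omega(w,w)=0$), so $\mathrm{Lie}(\mathrm H)=\R\times\mathcal E$ with these as exponential coordinates; since $\mathrm H$ is two-step nilpotent, its bracket equals the group commutator, which the multiplication rule computes as $[(\ell,w),(\hat\ell,\hat w)]_{\mathfrak h}=(-2\Omega(w,\hat w),0)$. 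Differentiating $\rho(\sigma)(r,u)=(q^{-1}r,\sigma u)$ at $\sigma=\mathrm{id}$, with $(\sigma u)(t)=Cu(q^{-1}(t-p))$ from \eqref{eq:H_actions}, gives $\rho_*(a,b,P)(\ell,w)=(-a\ell,\,Pw-(at+b)\dot w)$, which does preserve $\mathcal E$ since $\mathcal E$ is a linear $\mathrm S$-invariant subspace of $\mathrm{C}^\infty(I,V)$. Substituting these, together with the bracket of $\mathfrak s\subseteq\mathfrak{aff}(\R)\times\mathfrak{so}(V,\langle\cdot,\cdot\rangle)$ recalled just above the statement, into the standard semidirect-product formula $[(\xi,\eta),(\hat\xi,\hat\eta)]=([\xi,\hat\xi]_{\mathfrak s},\,[\eta,\hat\eta]_{\mathfrak h}+\rho_*(\xi)\hat\eta-\rho_*(\hat\xi)\eta)$ produces \eqref{eqn:Lie_bracket_Killing_model}.

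For the explicit form \eqref{eqn:Killing_model}, I would take a smooth one-parameter subgroup $\tau\mapsto\varPhi_\tau=(\sigma_\tau,r_\tau,u_\tau)$ of $\mathrm{Iso}(\widehat M,\hg)$ with $\varPhi_0=\mathrm{id}$ and velocity $((a,b,P),\ell,w)$, observe that $\varPhi_\tau$ is the flow of the associated Killing field $X$, and differentiate the action formula \eqref{eq:iso_action} at $\tau=0$. Using $q_0=1$, $\sigma_0=\mathrm{id}$ and $u_0\equiv 0$, the $t$-, $s$-, and $V$-slots of \eqref{eq:iso_action} give $(at+b)\partial_t$, $(-as+\ell)\partial_s$ and $Pv+w(t)$ immediately; the only term needing attention is $-\langle\dot u_\tau(\sigma_\tau t),\,2\sigma_\tau v+u_\tau(\sigma_\tau t)\rangle$, whose $\tau$-derivative at $0$ collapses to $-\langle\dot w(t),2v\rangle$ precisely because $u_0$ vanishes identically, contributing the remaining part of the $\partial_s$-component of \eqref{eqn:Killing_model}. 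That $X\leftrightarrow((a,b,P),\ell,w)$ is a linear bijection onto $\mathfrak s\times\R\times\mathcal E$ is automatic from Theorem~\ref{lem:full_iso_model} (equivalently, one checks directly that the Killing equation for a field of the shape \eqref{eqn:Killing_model} forces $(a,b,P)\in\mathfrak s$ and $\ddot w=fw+Aw$, i.e.\ $w\in\mathcal E$), and the Lie bracket of two such fields is again a Killing field, hence again of this shape.

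The one point I would be careful about is sign bookkeeping: for a left action $\mathrm{Lie}(G)\to\mathfrak X(\widehat M)$, $\xi\mapsto X$ is a Lie algebra \emph{anti}-homomorphism, so strictly \eqref{eqn:Lie_bracket_Killing_model} is the bracket of $\mathfrak s\ltimes_{\rho_*}\mathfrak h$ read with the opposite sign --- still a Lie algebra isomorphic to it via $X\mapsto-X$ --- which explains why the $\mathfrak{aff}(\R)$-slot appears as $\hat a b-a\hat b$ and the $\mathfrak{so}(V)$-slot as $[\hat P,P]$ rather than with the reverse signs. As a cross-check I would also verify \eqref{eqn:Lie_bracket_Killing_model} head-on, by feeding two fields of the form \eqref{eqn:Killing_model} into the Lie bracket of vector fields on $I\times\R\times V$: the $\partial_t$- and $V$-components are routine, while in the $\partial_s$-component the cross-pairing of the term $-\langle\dot w(t),2v\rangle$ of one field with the $V$-component $\hat Pv+\hat w(t)$ of the other, after using $\Omega(w,\hat w)=\langle\dot w,\hat w\rangle-\langle w,\dot{\hat w}\rangle$ and the ODE defining $\mathcal E$, is what assembles the term $2\Omega(w,\hat w)$. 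Beyond this bookkeeping I anticipate no real difficulty.
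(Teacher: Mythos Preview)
Your proposal is correct and is precisely the approach the paper intends: the Corollary is stated without proof, as an immediate consequence of Theorem~\ref{lem:full_iso_model}, so differentiating the group isomorphism and the action formula \eqref{eq:iso_action} is exactly what is implicitly meant. Your sign bookkeeping (the left-action anti-homomorphism accounting for $\hat a b-a\hat b$, $[\hat P,P]$, and the overall sign in the $\mathfrak h$-slot) and your cross-check via the direct vector-field bracket are both on point; there is nothing to add.
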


Assume now that $(\widehat{M},\hg)$ is of rank one. The Killing vector fields tangent to the leaves of $\mathcal{D}^\perp$ are obtained by setting $a=b=0$ in \eqref{eqn:Killing_model}, and $P$ is the action of its local flow on the space of parallel sections of $\mathcal{D}^\perp/\mathcal{D}$ (so that such action is trivial precisely when $P=0$). Taking equivalence classes of these Killing vector fields modulo $\mathcal{D}$, in turn, amounts to setting $\ell=0$. The resulting quotient space is naturally identified with $\mathcal{E}$, and the Lie bracket between $X\sim ((0,0,0),0,w)$ and $\hat{X} \sim ((0,0,0),0,\hat{w})$ is readily seen to be given by $[X,\hat{X}] = 2\Omega(w,\hat{w})\partial_s$, cf. \eqref{eqn:Lie_bracket_Killing_model}. The symplectic vector space $(\mathcal{E},\Omega)$ makes sense for an abstract rank-one ECS manifold, once the objects $A$ and $f$ in \eqref{eqn:A} and \eqref{eqn:Ric_f} are in place. As a result, the symplectic form $\Omega$ is in fact a $\mathcal{D}$-valued Lie bracket, which becomes $\R$-valued once a null parallel vector field spanning $\mathcal{D}$ has been chosen.

\section{Compact quotients of standard ECS plane waves}\label{sec:cpct_ex}

\subsection{The groups ${\rm G}(\sigma)$}

As mentioned in the Introduction, the compact rank-one ECS examples presented in \cite{DR_agag} were built as isometric quotients of standard ECS plane waves, which all turned out to be of rank one. Finding subgroups $\Gamma$ of ${\rm Iso}(\widehat{M},\hg) \cong {\rm S}\ltimes {\rm H}$ (see Theorem \ref{lem:full_iso_model}) for which the quotient $\widehat{M}/\Gamma$ is smooth and compact can be as complicated as the factor ${\rm S}$ is. Thus, we restrict our search for $\Gamma$ to a certain family $\{{\rm G}(\sigma)\}_{\sigma\in {\rm S}}$ of subgroups of ${\rm Iso}(\widehat{M},\hg)$. Namely, for each $\sigma \in {\rm S}$, we let
\begin{equation}
  {\rm G}(\sigma) = \{ (\sigma^k,r,u) \mid k \in \Z \mbox{ and }(r,u)\in {\rm H}\}\cong \Z\ltimes {\rm H}.
\end{equation}
Abbreviating each $(\sigma^k,r,u)$ simply to $(k,r,u)$, the action of ${\rm G}(\sigma)$ on $\widehat{M}$ reads
\begin{equation}\label{eqn:act_Gsigma}
    (k,r,u)\cdot(t,s,v) = (\sigma^kt, -\langle \dot{u}(\sigma^kt), 2\sigma^kv + u(\sigma^kt)\rangle + q^{-k}s+r, \sigma^kv+u(\sigma^kt)), 
  \end{equation}while the operation in ${\rm G}(\sigma)$ can be written as
  \begin{equation}\label{eqn:OP-Gsigma}
    (k,r,u)(\hat{k},\hat{r},\hat{u}) = (k+\hat{k}, r+q^{-k}\hat{r} - \Omega(u,\sigma^k\hat{u}), u+\sigma^k\hat{u}),
  \end{equation}for all $(k,r,u),(\hat{k},\hat{r},\hat{u}) \in {\rm G}(\sigma)$ and $(t,s,v) \in \widehat{M}$. Here, we use the notation introduced in \eqref{eq:H_actions} and the lines following it. Lastly, inspired by the discussion in Section \ref{subsec:td-dich}, we will say that
  \begin{equation}\label{eqn:td-dich}
  \parbox{.76\textwidth}{$\sigma$ is \emph{translational} if it has the form $\sigma=(1,p,C)$ and $I=\R$, and \emph{dilational} if it has the form $\sigma=(q,0,C)$ and $I=(0,\infty)$,}  
  \end{equation}
the interval $I$ being the one used to define $(\widehat{M},\hg)$ in \eqref{eqn:model}.

  \subsection{First-order subspaces, and the existence criterion}

In both \cite[Section 7]{DT_mofm} and \cite[Section 5]{DT_advg}, for the sake of self-containedness, we have presented \emph{ad hoc} proofs that once a suitable subgroup $\Gamma$ of ${\rm Iso}(\widehat{M},\hg)$ (in fact, lying in some ${\rm G}(\sigma)$) has been found, the resulting quotient $\widehat{M}/\Gamma$ is indeed smooth and compact. Adapting the results from \cite[Sections 4--7]{DR_agag}, we establish in Theorem \ref{teo:criterion_compact} below a more general criterion for the existence of subgroups $\Gamma$ of a given ${\rm G}(\sigma)$ producing smooth compact quotients $\widehat{M}/\Gamma$.

Consider again the symplectic vector space $(\mathcal{E},\Omega)$ introduced in \eqref{eqn:symp}, associated with a standard ECS plane wave $(\widehat{M},\hg)$. A vector subspace $\mathcal{L}$ of $\mathcal{E}$ is called a \emph{first-order subspace} if, for every $t\in I$, the restriction $\delta_t|_{\mathcal{L}}\colon\mathcal{L} \to V$ is an isomorphism, where $\delta_t\colon \mathcal{E}\to V$ denotes the evaluation at $t$. The name `first-order' is explained by the next result, which is a direct generalization of \cite[Lemma 4.1]{DR_agag}:

  \begin{lemma}\label{lem:first-order-subspaces}
  First-order subspaces $\mathcal{L}$ of $(\mathcal{E},\Omega)$ are in bijective correspondence with smooth curves $B\colon I\to {\rm End}(V)$ such that $\dot{B}+B^2=f+A$, where $f$ stands for the function $t\mapsto f(t)\,{\rm Id}_V$. The correspondence assigns to $B$ the space $\mathcal{L}$ of all solutions $u\colon I\to V$ to the differential equation $\dot{u}(t) = B(t)u(t)$, and
  \begin{enumerate}[\normalfont(i)]
  \item $\mathcal{L}$ is a Lagrangian subspace of $(\mathcal{E},\Omega)$ if and only if each $B(t)$ is self-adjoint.
  \end{enumerate}
  In addition, if $\sigma =(q,p,C)$ is chosen as in \eqref{eqn:td-dich}, then
  \begin{enumerate}[\normalfont(i)]\setcounter{enumi}{1}
  \item $\mathcal{L}$ is $\sigma$-invariant if and only if $B(\sigma t) = q^{-1}CB(t)C^{-1}$ for each $t\in I$;
  \item whenever $\mathcal{L}$ is $\sigma$-invariant, the determinant of $\sigma|_{\mathcal{L}}\colon \mathcal{L}\to\mathcal{L}$ is given by
    \begin{equation}\label{eq:detT}
      \det \big(\sigma|_{\mathcal{L}}\big)  =(\det C) \exp\left(-\int_\varepsilon^{\sigma\varepsilon} {\rm tr}(B(t))\,{\rm d}t\right),
    \end{equation}where $\varepsilon$ is $0$ or $1$ according to whether $\sigma$ is translational or dilational.
  \end{enumerate}
\end{lemma}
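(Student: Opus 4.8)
The plan is to set up the bijection concretely and then verify (i)--(iii) by direct calculation, handling the correspondence first and the three claims in order.

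To establish the bijection, given a first-order subspace $\mathcal{L}$, I would define $B(t)$ by the requirement that $\dot u(t) = B(t)u(t)$ for all $u\in\mathcal{L}$; this makes sense because $\delta_t|_{\mathcal{L}}$ is an isomorphism, so we may set $B(t) = (\text{the map }u(t)\mapsto \dot u(t))$, i.e. $B(t) = \dot\delta_t\circ(\delta_t|_{\mathcal{L}})^{-1}$, which is smooth in $t$. Differentiating $\dot u = Bu$ and using $\ddot u = fu + Au$ from \eqref{eqn:symp} gives $(\dot B + B^2)u = (f+A)u$ for all $u\in\mathcal{L}$, hence $\dot B + B^2 = f+A$ since $\delta_t|_{\mathcal{L}}$ is onto. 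Conversely, given such a $B$, the solution space $\mathcal{L}$ of $\dot u = Bu$ is an $(n-2)$-dimensional subspace of $\mathcal{E}$ on which every $\delta_t$ is an isomorphism by uniqueness of solutions to the linear ODE; one checks $\mathcal{L}\subseteq\mathcal{E}$ by differentiating once more. These two assignments are mutually inverse by construction. For (i), compute $\Omega(u,w) = \langle\dot u,w\rangle - \langle u,\dot w\rangle = \langle Bu,w\rangle - \langle u,Bw\rangle = \langle(B-B^*)u,w\rangle$ evaluated at any $t$; since $\delta_t|_{\mathcal{L}}$ is onto $V$, $\Omega$ vanishes on $\mathcal{L}$ iff $B(t) = B(t)^*$. (The value is $t$-independent since $\Omega$ is, which also gives a consistency check: $B-B^*$ satisfies a linear ODE forcing it to be identically zero once it vanishes anywhere.)

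For (ii), recall from \eqref{eq:H_actions}--(iii) that $(\sigma u)(t) = Cu(q^{-1}(t-p)) = Cu(\sigma^{-1}t)$. Then $\mathcal{L}$ is $\sigma$-invariant iff for every $u\in\mathcal{L}$ the curve $t\mapsto Cu(\sigma^{-1}t)$ again solves $\dot{(\cdot)} = B(\cdot)$. Differentiating, $\frac{d}{dt}Cu(\sigma^{-1}t) = q^{-1}C\dot u(\sigma^{-1}t) = q^{-1}CB(\sigma^{-1}t)u(\sigma^{-1}t) = q^{-1}CB(\sigma^{-1}t)C^{-1}\cdot Cu(\sigma^{-1}t)$, so invariance is equivalent to $q^{-1}CB(\sigma^{-1}t)C^{-1} = B(t)$ for all $t$, i.e. $B(\sigma t) = q^{-1}CB(t)C^{-1}$ after the substitution $t\mapsto\sigma t$. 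For (iii), the operator $\sigma|_{\mathcal{L}}$ is conjugate via $\delta_\varepsilon|_{\mathcal{L}}$ to the map $V\to V$ sending $u(\varepsilon)\mapsto (\sigma u)(\varepsilon) = Cu(\sigma^{-1}\varepsilon)$; thus $\det(\sigma|_{\mathcal{L}}) = \det(\delta_\varepsilon\circ\sigma\circ(\delta_\varepsilon|_{\mathcal{L}})^{-1})$, and writing $u(\sigma^{-1}\varepsilon)$ in terms of $u(\varepsilon)$ via the fundamental solution $\Phi(t)$ of $\dot\Phi = B\Phi$ (so $u(t) = \Phi(t)\Phi(\varepsilon)^{-1}u(\varepsilon)$) yields $\det(\sigma|_{\mathcal{L}}) = (\det C)\det\big(\Phi(\sigma^{-1}\varepsilon)\Phi(\varepsilon)^{-1}\big)$. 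By the Liouville/Abel formula, $\frac{d}{dt}\log\det\Phi(t) = \mathrm{tr}\,B(t)$, so $\det\big(\Phi(\sigma^{-1}\varepsilon)\Phi(\varepsilon)^{-1}\big) = \exp\!\big(\!-\!\int_{\sigma^{-1}\varepsilon}^{\varepsilon}\mathrm{tr}\,B(t)\,dt\big)$. Finally substitute $t\mapsto\sigma t$ in the integral, using $B(\sigma t) = q^{-1}CB(t)C^{-1}$ from (ii) so that $\mathrm{tr}\,B(\sigma t)\cdot\frac{d(\sigma t)}{dt}$ relates the two; since $\sigma$ is translational ($q=1$, $\varepsilon=0$) or dilational ($p=0$, $\varepsilon=1$, $\sigma t = qt$), the change of variables converts $\int_{\sigma^{-1}\varepsilon}^{\varepsilon}$ into $\int_{\varepsilon}^{\sigma\varepsilon}$ up to the sign and the factor absorbed by $q^{-1}$, producing exactly \eqref{eq:detT}.

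The main obstacle I anticipate is bookkeeping in (iii): keeping the orientation of the limits of integration, the factor $q$ from $d(\sigma t)$, and the $q^{-1}$ inside $B(\sigma t)$ consistent across the two cases $\varepsilon=0$ and $\varepsilon=1$, and confirming that the conjugation by $C$ inside the trace drops out. Everything else is a routine consequence of the theory of linear ODEs (existence/uniqueness giving the bijection, Abel's formula giving the determinant); the one genuine idea is realizing $B$ as the ``first-order operator'' $\dot\delta_t\circ(\delta_t|_{\mathcal{L}})^{-1}$ canonically attached to $\mathcal{L}$, which I would state up front.
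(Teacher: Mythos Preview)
The paper does not actually prove this lemma; it states the result and notes that it is ``a direct generalization of \cite[Lemma 4.1]{DR_agag}.'' Your argument is correct and is precisely the natural proof: extracting $B(t)$ as $\dot\delta_t\circ(\delta_t|_{\mathcal L})^{-1}$, verifying the Riccati equation $\dot B+B^2=f+A$ by differentiating $\dot u=Bu$, and handling (i)--(iii) by direct computation with Liouville's formula for (iii).

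One small cleanup for (iii): there is no sign issue to worry about. Since $q>0$ in both the translational and dilational cases, the substitution $s=\sigma t$ is increasing and carries $[\sigma^{-1}\varepsilon,\varepsilon]$ to $[\varepsilon,\sigma\varepsilon]$ with the correct orientation; meanwhile $dt=q^{-1}\,ds$ and, by (ii) together with conjugation-invariance of the trace, ${\rm tr}\,B(\sigma^{-1}s)=q\,{\rm tr}\,B(s)$, so the two factors cancel and
\[
\int_{\sigma^{-1}\varepsilon}^{\varepsilon}{\rm tr}\,B(t)\,dt \;=\; \int_{\varepsilon}^{\sigma\varepsilon}{\rm tr}\,B(s)\,ds
\]
holds on the nose, yielding \eqref{eq:detT} directly.
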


Now, given $\sigma\in {\rm S}$, we let $\Pi\colon {\rm G}(\sigma) \to \Z$ and $\delta\colon \ker \Pi\to \mathcal{E}$ be the natural projections, given by $\Pi(k,r,u)=k$ and $\delta(0,r,u) = u$. In addition, to each subgroup $\Gamma$ of ${\rm G}(\sigma)$ we associate the following objects: \vspace{-1em}
\begin{equation}\label{eqn:objects_Gamma}
\vspace{-1em}  \parbox{.5\textwidth}{
    \begin{enumerate}[(i)]
    \item the intersection $\Sigma = \Gamma \cap \ker \Pi$.
    \item the image $\Lambda = \delta(\Sigma)$.
    \item the subspace $\mathcal{L}$ of $\mathcal{E}$ spanned by $\Lambda$.
    \end{enumerate}}
\end{equation}
Note that $\Sigma$ may be seen as a subset of ${\rm H} = \R\times \mathcal{E}$ in view of Remark \ref{rem:identify_H}, and we in fact have $\Sigma\subseteq \R\times \Lambda$.

 The next result generalizes \cite[Theorem 5.1]{DR_agag}, originally stated only for the case where $\sigma \in {\rm S}$ is translational and with $C= {\rm Id}_V$. Below, $\mathcal{P}$ is the distribution introduced in \eqref{eqn:distr_P}, and the conjugation mapping ${\rm C}_\gamma$ is as in Remark \ref{rem:identify_H}. Note that in the rank-two case, the distribution $\mathcal{P}$ also survives in the resulting compact quotients.

\begin{lemma}\label{lem:technical}
  For a standard ECS plane wave $(\widehat{M},\hg)$ and $\sigma \in {\rm S}$ chosen as in \eqref{eqn:td-dich}, let there be a subgroup $\Gamma$ of ${\rm G}(\sigma)$ acting freely and properly discontinuously on $\widehat{M}$ with a compact quotient $M = \widehat{M}/\Gamma$. Then:
  \begin{enumerate}[\normalfont(i)]
  \item There is $\theta \in [0,\infty)$ such that $\Sigma \cap (\R\times \{0\}) = \Z\theta\times \{0\}$ and $2\Omega(u,\hat{u}) \in \Z\theta$ for all $u,\hat{u}\in \Lambda$.
  \item For every $k\in \Pi(\Gamma)$, we have $\sigma^k(\Lambda)=\Lambda$ and $\sigma^k(\mathcal{L})=\mathcal{L}$. In addition, $\sigma^k|_{\mathcal{L}} = {\rm Id}_{\mathcal{L}}$ if $k$ arises from a central element of $\Gamma$, and thus $\Gamma$ is not virtually Abelian unless $\sigma^k|_{\mathcal{L}}= {\rm Id}_{\mathcal{L}}$ for some positive $k\in \Pi(\Gamma)$.
  \item There is a locally trivial fibration $M\to \esf^1$ whose fibers are the leaves of $\mathcal{P}^\perp$, all diffeomorphic to tori when $\theta = 0$ in \emph{(i)}.
  \item $\mathcal{L}$ is a $\sigma$-invariant first-order subspace of $\mathcal{E}$ and, for $\theta$ in \emph{(i)}, one of the following situations must occur:
    \begin{enumerate}[\normalfont(a)]
    \item $\theta = 0$, $\mathcal{L}$ is Lagrangian, and $\Sigma$ is a lattice in $\R\times \mathcal{L}$ projecting isomorphically onto $\Lambda$ under the projection $\R\times\mathcal{L} \to \mathcal{L}$.
    \item $\theta > 0$, while $\Lambda$ and each $\Lambda_t = \delta_t(\Sigma)$ are lattices in $\mathcal{L}$ and $V$, respectively.
    \end{enumerate}
  \item If $\sigma$ is dilational, then option \emph{(a)} in \emph{(iv)} is the one that holds and $\Gamma$ is not virtually Abelian.
  \end{enumerate}
\end{lemma}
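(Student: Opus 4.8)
The plan is to prove the five statements in sequence, since each one feeds into the next, and the engine throughout is the interplay between compactness of $M = \widehat M/\Gamma$ and the explicit group law \eqref{eqn:OP-Gsigma} restricted to the normal Heisenberg piece ${\rm H} = \R \times \mathcal{E}$. Recall from Remark \ref{rem:identify_H} that $\Sigma = \Gamma \cap \ker\Pi$ sits inside ${\rm H}$, and that conjugation by $(k,r,u) \in {\rm G}(\sigma)$ acts on $(0,\hat r, \hat u) \in \Sigma$ by ${\rm C}_{(k,r,u)}(\hat r,\hat u) = (q^{-k}\hat r - 2\Omega(u,\sigma^k\hat u),\, \sigma^k\hat u)$. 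For part (i), I would first argue that $\Sigma \cap (\R \times \{0\})$ — the ``vertical'' central elements — is a discrete subgroup of $\R$: if it were dense, the deck action would fail to be properly discontinuous along the $\partial_s$-lines, contradicting freeness/properness of $\Gamma$ on $\widehat M$. Hence it is $\Z\theta \times \{0\}$ for a unique $\theta \ge 0$. The claim that $2\Omega(u,\hat u) \in \Z\theta$ for $u,\hat u \in \Lambda$ then comes from the commutator identity in ${\rm H}$: for $\gamma = (0,r,u)$ and $\hat\gamma = (0,\hat r,\hat u)$ in $\Sigma$, a direct computation with \eqref{eqn:OP-Gsigma} gives $[\gamma,\hat\gamma] = (0,\, -2\Omega(u,\hat u),\, 0)$ (the $\mathcal{E}$-components cancel since $\sigma^0 = {\rm Id}$), which is a vertical central element of $\Gamma$, hence lies in $\Z\theta \times \{0\}$.

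For (ii): $\Gamma$ normalizes $\Sigma$ (being normal in no sense, but $\Sigma = \Gamma \cap \ker\Pi$ with $\ker\Pi \trianglelefteq {\rm G}(\sigma)$, so $\Sigma \trianglelefteq \Gamma$), and conjugation by any $\gamma \in \Gamma$ with $\Pi(\gamma) = k$ sends $(0,\hat r,\hat u) \mapsto (\ast,\, \sigma^k\hat u)$; taking $\delta$ of both sides shows $\sigma^k(\Lambda) = \Lambda$, hence $\sigma^k(\mathcal{L}) = \mathcal{L}$ on the span. If moreover $\gamma$ is central in $\Gamma$, then ${\rm C}_\gamma$ fixes every element of $\Sigma$, so in particular $\sigma^k\hat u = \hat u$ for all $\hat u \in \Lambda$, i.e. $\sigma^k|_{\mathcal{L}} = {\rm Id}_{\mathcal{L}}$. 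The ``virtually Abelian'' statement follows because a finite-index Abelian subgroup $\Gamma'$ has nontrivial center meeting every nontrivial part of $\Gamma'$, and $\Pi(\Gamma')$ still contains nonzero integers unless $\Gamma'$ (hence $\Gamma$) is virtually contained in $\ker\Pi$, which is impossible for a cocompact lattice acting on the noncompact $\widehat M$ along the $t$-direction — so some positive $k \in \Pi(\Gamma)$ must come from a central element of such a $\Gamma'$, forcing $\sigma^k|_{\mathcal{L}} = {\rm Id}_{\mathcal{L}}$.

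For (iii): the homomorphism $\Pi\colon\Gamma \to \Z$ has image $d\Z$ for some $d \ge 1$ (it is nonzero by the cocompactness argument above), and the composite $\widehat M \to I \xrightarrow{t} I$ descends, after quotienting by $\Gamma$ and by the induced $\Z$-action on $I$ (translation by $dp$ in the translational case, or multiplication by $q^d$ in the dilational case, both giving $I/\!\sim \;\cong \esf^1$), to a fibration $M \to \esf^1$; its fibers are exactly the images of the level sets $\{t = {\rm const}\}$, which are the leaves of $\mathcal{P}^\perp$. When $\theta = 0$, the fiber is $\Sigma$-covered by $\R \times V$ modulo a group that by (i) acts with no vertical holonomy, hence is a lattice acting by translations, so the fiber is a torus. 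For (iv) and (v): $\mathcal{L}$ is a first-order subspace because $\Lambda_t = \delta_t(\Sigma)$ must be a lattice (else the fiber is noncompact), which forces $\delta_t|_{\mathcal{L}}$ to be onto, hence — by dimension count once one knows $\dim\mathcal{L} \le \dim V$, which follows from $\mathcal{L}$ being isotropic-or-Lagrangian via (i) — an isomorphism; the self-adjointness/Lagrangian dichotomy is read off from $2\Omega|_{\Lambda \times \Lambda}$ being valued in $\Z\theta$, which is $\{0\}$ exactly when $\theta = 0$. Finally, if $\sigma$ is dilational then the determinant formula \eqref{eq:detT} combined with $\sigma^k|_{\mathcal{L}} = {\rm Id}$ (needed for virtual Abelian-ness) would force $(\det C)^k = \exp(k\int_1^{q}{\rm tr}\,B)$ to be compatible with preserving a lattice $\Lambda$ under a nontrivial dilation of $V$ — but a lattice in $V$ cannot be invariant under scaling by $q^k \ne \pm 1$, which rules out case (b) and rules out virtual Abelian-ness, leaving case (a).

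\medskip

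I expect the main obstacle to be part (iv)(a)–(b): specifically, showing that $\Sigma$ is actually a \emph{full} lattice in $\R \times \mathcal{L}$ (resp. that $\Lambda$ is a lattice in $\mathcal{L}$) rather than merely a discrete subgroup of smaller rank. The cocompactness of $\Gamma$ gives cocompactness of $\Sigma$ in $\ker\Pi/(\text{something})$ only after one has correctly identified the relevant quotient; pinning down that the fiber of $M \to \esf^1$ is compact, and translating that into ``$\Lambda_t$ is a lattice in $V$ for one (hence every) $t$,'' requires care because the fiber is itself a quotient of $\R \times V$ by the possibly-twisted action of $\Sigma$ (twisted by the $\Omega$-cocycle in the $\R$-direction). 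Disentangling the $\theta > 0$ case — where the fiber is a nilmanifold rather than a torus and one must check that $\Sigma$ has the right Hirsch length — is where the argument of \cite[Sections 4–7]{DR_agag} does its real work, and adapting it to general signature and general $C \in {\rm O}(V,\langle\cdot,\cdot\rangle)$ (rather than $C = {\rm Id}_V$) is the substantive point.
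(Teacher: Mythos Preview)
Your treatment of (i)--(iii) is essentially the paper's own approach (the paper defers (i), (ii), (iv) to \cite[Theorem~5.1]{DR_agag} and proves (iii) exactly as you sketch, via Ehresmann and the explicit $\Sigma$-equivariant identification \eqref{eqn:Sigma-eq-diffeo}). Two points, however, need correction.

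\medskip

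\textbf{Part (iv).} Your proposed dimension bound $\dim\mathcal{L}\le\dim V$ ``from $\mathcal{L}$ being isotropic-or-Lagrangian via (i)'' fails when $\theta>0$: the condition $2\Omega(u,\hat u)\in\Z\theta$ on $\Lambda\times\Lambda$ does \emph{not} force $\Omega|_{\mathcal{L}\times\mathcal{L}}=0$, so $\mathcal{L}$ need not be isotropic. The bound must instead come from a rank count: $\Sigma$ acts cocompactly on each $\widehat M_t\cong\R\times V$, hence is a lattice in an $(n-1)$-dimensional simply connected nilpotent Lie group and has Hirsch length $n-1$; thus $\Lambda=\Sigma/(\Z\theta)$ has rank $n-2$, which bounds the dimension of its span $\mathcal{L}$. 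You correctly identify this as the hard step, but the isotropy shortcut you propose does not get there.

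\medskip

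\textbf{Part (v).} Your argument here is garbled. There is no ``dilation of $V$'': the $\sigma$-action on $V$ is the isometry $C$, and the determinant formula \eqref{eq:detT} plays no role. The mechanism the paper uses is the one already visible in Remark~\ref{rem:identify_H}: conjugation by any $\gamma\in\Gamma$ with $\Pi(\gamma)=k\neq0$ multiplies the \emph{$\R$-component} of elements of $\Sigma$ by $q^{-k}$. If $\theta>0$, this sends $(\theta,0)\in\Sigma$ to $(q^{-km}\theta,0)\in\Sigma$ for all $m$, producing a sequence of distinct elements of $\Gamma$ accumulating at the identity and contradicting proper discontinuity --- this forces $\theta=0$, i.e.\ alternative (a). For virtual Abelian-ness: if $\Gamma'\le\Gamma$ were Abelian of finite index, then $\Gamma'\cap\Sigma$ still spans $\R\times\mathcal{L}$, so contains elements with nonzero $\R$-component; any $\gamma\in\Gamma'\smallsetminus\Sigma$ must commute with these, yet ${\rm C}_\gamma$ scales their $\R$-component by $q^{-k}\neq1$. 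Your sentence may be an attempt at exactly this (with ``$V$'' written where ``$\R$'' is meant), but as stated it does not connect to either conclusion.
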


\begin{proof}
  The proof of \cite[Theorem 5.1]{DR_agag} can be applied here, \emph{verbatim}, to obtain a major part of the claimed conclusions. Namely, (i), (ii) and (iv) here correspond to (c)-(d), (e)-(f), and (g)-(h) in \cite{DR_agag}, respectively.

  Consider (iii). It is clear that
  \begin{equation}\label{eqn:Sigma_Mt}
    \parbox{.6\textwidth}{$\Sigma$ acts freely and properly discontinuously on each hypersurface $\widehat{M}_t = \{t\}\times \R\times V$, with $t\in I$,}
  \end{equation}
  and that the projection $\widehat{M}\ni (t,s,v)\mapsto t\in I$ becomes a $\Gamma$-equivariant map (the action of $\Gamma$ on $I$ is induced by the action of ${\rm S}$, cf. (\ref{eq:H_actions}-i)). Such projection then descends to a surjective submersion $M =\widehat{M}/\Gamma \to I/\Gamma \cong \mathbb{S}^1$, whose (automatically compact) fibers are precisely the closed images of the natural embeddings $\widehat{M}_t/\Sigma \hookrightarrow \widehat{M}/\Gamma$. Here,
  \begin{equation}\label{eqn:quot_I-Gam}
    \parbox{.77\textwidth}{the quotient $\,I/\Gamma\,$ equals either $\,\R/p\Z\,$ or $\,(0,\infty)/q^\Z\,$ according to whether $\,\sigma\,$ in \eqref{eqn:td-dich} is translational or dilational, respectively, but both quotients are naturally isomorphic to the circle $\,\esf^1$.}
  \end{equation}
  By Ehresmann's fibration theorem, $M \to \mathbb{S}^1$ is a fiber bundle projection. It remains to show that, when $\theta=0$, each quotient $\widehat{M}_t/\Sigma$ is diffeomorphic to a torus. The surjective homomorphism $\delta|_{\Sigma}\colon \Sigma \to \Lambda$, having trivial kernel by (i), must be an isomorphism, allowing us to regard $\Sigma$ (now Abelian) as a discrete subgroup of $\R\times\mathcal{L}$. Next, as $\mathcal{L}$ is Lagrangian by (i), the induced Heisenberg operation in $\R\times\mathcal{L}$ coincides with its obvious vector space addition. The mapping
  \begin{equation}\label{eqn:Sigma-eq-diffeo}
    \R\times\mathcal{L} \ni (r,u) \mapsto \big(t, r-\langle\dot{u}(t),u(t)\rangle, u(t)\big) \in \widehat{M}_t
  \end{equation}is a $\Sigma$-equivariant diffeomorphism, making $(\R\times\mathcal{L})/\Sigma$ diffeomorphic to $\widehat{M}_t/\Sigma$, and hence compact. Thus $\Sigma$ is a lattice in $\R\times\mathcal{L}$ and $(\R\times\mathcal{L})/\Sigma$ is a torus.
  
  We now address (v). If $\sigma$ is dilational, $\Gamma$ must necessarily contain an element $\gamma = (k,r,u)$ with $k\neq 0$, so that $\gamma_\ast \partial_s = q^{-k}\partial_s$ with $q\neq 1$, while the complete flow $\phi$ of the Killing vector field $\partial_s$ in $\widehat{M}$ is given in ${\rm G}(\sigma)$  by $\phi(\tau,\cdot) = (0,\tau,0)$. If it were $\theta>0$, we would have $\phi(\tau,\cdot) \in \Gamma$ for some $\tau\neq 0$ by (i), leading to a contradiction \cite[Lemma 2.2]{DT_pm}: for $a=q^{-k}$ we have $|a|\neq 1$, while $\gamma^m\circ \phi(\tau,\cdot) = \phi(a^m\tau,\cdot)\circ \gamma^m$ leads to $\phi(a^m\tau,\cdot)\in \Gamma$, for every $m\in \Z$. If $\eta = {\rm sgn}(1-|a|)$ and $x\in \widehat{M}$ is arbitrary, $\{\phi(a^m\tau,x)\}_{m\geq 1}$ is a sequence in $\widehat{M}$ with mutually distinct terms for $m$ large, converging to $x$, thus precluding proper discontinuity of the action of $\Gamma$ on $\widehat{M}$. This proves that $\theta=0$. Finally, if $\Gamma$ had an Abelian subgroup $\Gamma'$ with $[\Gamma:\Gamma']<\infty$, then $\Gamma'\not\subseteq \Sigma$ as $[\Gamma\colon \Sigma]=\infty$, so that $\Pi(\Gamma')$ is not trivial, while $[\Gamma'\cap \Sigma:\Sigma] < \infty$ implies that $\Gamma'\cap \Sigma$ spans $\R\times\mathcal{L}$. A second contradiction follows: let $\gamma \in \Gamma'\smallsetminus \Sigma$ and note that while ${\rm C}_\gamma$ is trivial as $\Gamma'$ is Abelian, its first component equals $q^{-1}\neq 1$, by \eqref{eqn:dich-q} and the last line of Remark \ref{rem:identify_H}.
\end{proof}

We are ready for the main result of this section. 

\begin{theorem}\label{teo:criterion_compact}
  For a standard ECS plane wave $(\widehat{M},\hg)$ of either rank, and an isometry $\gamma = (\sigma,b,w)$ with $\sigma\in {\rm S}$ chosen as in \eqref{eqn:td-dich}, the following conditions are equivalent:
  \begin{enumerate}[\normalfont(a)]
  \item There is a discrete subgroup $\Gamma \leq {\rm G}(\sigma)$ acting freely and properly discontinuously on $\widehat{M}$ with a compact quotient $M = \widehat{M}/\Gamma$.
  \item There is a $\sigma$-invariant first-order subspace $\mathcal{L}$ of $(\mathcal{E},\Omega)$, a lattice $\Sigma \subseteq \R\times \mathcal{L}$ with ${\rm C}_\gamma(\Sigma)=\Sigma$, and $\theta \in[0,\infty)$ such that $\Sigma\cap (\R\times \{0\}) = \Z\theta\times \{0\}$ and $\Omega(u,\hat{u})\in \Z\theta$ for all $u,\hat{u} \in \Lambda$, where $\Lambda$ is the image of $\Sigma$ under the projection $\R\times\mathcal{L} \to \mathcal{L}$.
  \end{enumerate}
  If \emph{(b)} holds, $\Gamma$ in \emph{(a)} can be taken to be the group generated by $\gamma$ and $\Sigma$ and there is a locally trivial fibration $M\to \esf^1$ whose fibers, all diffeomorphic to a torus or to a $2$-step nilmanifold according to whether $\mathcal{L}$ is Lagrangian or not, are the leaves of $\mathcal{P}^\perp$. Finally, $M$ equipped with its natural quotient metric is translational and complete, or dilational and incomplete, according to whether $\sigma \in {\rm S}$ itself is translational or dilational.
\end{theorem}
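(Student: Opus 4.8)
The plan is to prove the equivalence (a)$\Leftrightarrow$(b) by invoking Lemma~\ref{lem:technical} for one direction and performing an explicit construction for the other, and then to read off the remaining assertions about the fibration and the metric from the structure exhibited during the construction. For (a)$\Rightarrow$(b): given $\Gamma\leq{\rm G}(\sigma)$ as in (a), form the associated objects $\Sigma$, $\Lambda$, $\mathcal{L}$ as in \eqref{eqn:objects_Gamma}. Part (iv) of Lemma~\ref{lem:technical} already tells us $\mathcal{L}$ is a $\sigma$-invariant first-order subspace and gives $\theta\in[0,\infty)$ with $\Sigma\cap(\R\times\{0\})=\Z\theta\times\{0\}$; part (i) gives $2\Omega(u,\hat u)\in\Z\theta$ for $u,\hat u\in\Lambda$ (one absorbs the factor $2$ into $\theta$ by a harmless relabeling, or keeps it and notes the statement of (b) is written with $\Omega$, not $2\Omega$). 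That $\Sigma$ is a lattice in $\R\times\mathcal{L}$ is exactly case (a) or (b) of Lemma~\ref{lem:technical}(iv) — in case (b) one checks directly that $\Lambda$ a lattice in $\mathcal{L}$ together with $\Sigma\cap(\R\times\{0\})=\Z\theta\times\{0\}$, $\theta>0$, forces $\Sigma$ to be a lattice in $\R\times\mathcal{L}$. Finally ${\rm C}_\gamma(\Sigma)=\Sigma$ holds because $\Sigma=\Gamma\cap\ker\Pi$ is normal in $\Gamma\ni\gamma$ and ${\rm C}_\gamma$ preserves $\ker\Pi={\rm H}$ by Remark~\ref{rem:identify_H}.

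For the converse (b)$\Rightarrow$(a), I would let $\Gamma$ be the subgroup of ${\rm G}(\sigma)$ generated by $\gamma=(\sigma,b,w)$ and $\Sigma$, and verify it does the job. The condition ${\rm C}_\gamma(\Sigma)=\Sigma$ makes $\Sigma$ normal in $\Gamma$ with $\Gamma/\Sigma$ cyclic (generated by the image of $\gamma$), so $\Gamma$ is a split extension $\Z\ltimes\Sigma$ once one checks $\gamma^k\notin\Sigma$ for $k\neq 0$ — which holds because $\Pi(\gamma^k)=k\sigma^k$-exponent is nonzero while $\Pi|_\Sigma$ is trivial. Discreteness of $\Gamma$ in ${\rm G}(\sigma)$ follows since $\Sigma$ is discrete (being a lattice in $\R\times\mathcal{L}$) and the $\Z$-direction is discrete. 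The substantive points are (1) freeness and proper discontinuity of the $\Gamma$-action on $\widehat M$, and (2) compactness of $\widehat M/\Gamma$. For proper discontinuity, I would use the $\Gamma$-equivariant projection $\widehat M\ni(t,s,v)\mapsto t\in I$ over $I/\Gamma\cong\esf^1$ (cf. \eqref{eqn:quot_I-Gam}): on each fibre $\widehat M_t$, the action of $\Sigma$ is that of a lattice in $\R\times\mathcal{L}$ transported via the $\Sigma$-equivariant diffeomorphism of \eqref{eqn:Sigma-eq-diffeo}-type (which identifies $\widehat M_t/\Sigma$ with $(\R\times\mathcal{L})/\Sigma$, compact); properness along the base $\esf^1$-direction is automatic; freeness is checked on generators — $\Sigma$ acts freely as a lattice acting by translations in the appropriate coordinates, and no nontrivial power of $\gamma$ can fix a point since it shifts $t$ nontrivially when $\sigma$ is translational (by $kp$) or scales $t$ nontrivially when $\sigma$ is dilational (by $q^k$). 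Compactness of $M=\widehat M/\Gamma$ then follows from Ehresmann's theorem applied to the submersion $M\to\esf^1$ with compact fibre $\widehat M_t/\Sigma$, or more directly by exhibiting a compact fundamental domain as a product of a fundamental domain for $I/\Gamma$ in $I$ with one for $\Sigma$ in $\widehat M_t$.

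It remains to identify the fibres and settle the completeness/signature dichotomy. The fibre over the image of $t$ is $\widehat M_t/\Sigma$; the map \eqref{eqn:Sigma-eq-diffeo}, or its evident analogue here, identifies $\widehat M_t$ $\Sigma$-equivariantly with $\R\times\mathcal{L}$ carrying the Heisenberg operation restricted from ${\rm H}$. When $\mathcal{L}$ is Lagrangian this operation is ordinary vector addition, so $(\R\times\mathcal{L})/\Sigma$ is a torus; when $\mathcal{L}$ is not Lagrangian, $\Omega|_{\mathcal{L}}\neq 0$ makes $\R\times\mathcal{L}$ a genuine $2$-step nilpotent Lie group and $\Sigma$ a lattice in it (a lattice exists precisely because $\Omega(u,\hat u)\in\Z\theta$ for $u,\hat u\in\Lambda$ with $\theta>0$ the generator of $\Sigma\cap(\R\times\{0\})$, which is Malcev's rationality condition), so the quotient is a $2$-step nilmanifold. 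That these fibres are the leaves of $\mathcal{P}^\perp$ is immediate: $\mathcal{P}^\perp$ is spanned by $\partial_s$ and the $V$-directions, i.e. it is the tangent distribution to the hypersurfaces $\widehat M_t$, and the $\Gamma$-action respects this. Finally, the quotient metric is translational or dilational exactly as $\sigma$ is, by the very definition \eqref{eqn:td-dich} and the discussion in Section~\ref{subsec:td-dich} relating the holonomy of $\mathcal{P}$ to the image of \eqref{eqn:two_homs}-ii (here the values $q^k$, $k\in\Z$); and completeness of $M$ is equivalent to completeness of its universal cover-related model $(\widehat M,\hg)$, which by \eqref{eqn:completeness_model} holds iff $I=\R$ iff $\sigma$ is translational. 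The main obstacle I anticipate is the careful bookkeeping in the converse direction — ensuring that the lattice $\Sigma\subseteq\R\times\mathcal{L}$ and the single extra generator $\gamma$ genuinely assemble into a group acting properly discontinuously and freely, which is where the compatibility conditions ${\rm C}_\gamma(\Sigma)=\Sigma$ and the $\Z\theta$-quantization of $\Omega$ on $\Lambda$ do their real work; the rest is a transcription of the arguments in \cite{DR_agag} and Lemma~\ref{lem:technical}.
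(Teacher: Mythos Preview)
Your plan for (b)$\Rightarrow$(a) and for the concluding assertions matches the paper's: take $\Gamma=\langle\gamma,\Sigma\rangle$, identify each $\widehat{M}_t/\Sigma$ with $(\R\times\mathcal{L})/\Sigma$ via a $\Sigma$-equivariant diffeomorphism of the type \eqref{eqn:Sigma-eq-diffeo}, and assemble $M$ as a bundle over $\esf^1$ with compact fibre $(\R\times\mathcal{L})/\Sigma$. The paper packages this as an explicit construction $M=(I\times N)/\Z$ with $N=(\R\times\mathcal{L})/\Sigma$, but the content is the same.

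The genuine gap is in (a)$\Rightarrow$(b). Lemma~\ref{lem:technical}(i) gives only $2\Omega(u,\hat u)\in\Z\theta$, whereas (b) demands $\Omega(u,\hat u)\in\Z\theta$; halving $\theta$ is \emph{not} a harmless relabeling, since it would destroy the companion condition $\Sigma\cap(\R\times\{0\})=\Z\theta\times\{0\}$. The paper handles this (in the translational case with $\theta>0$; when $\sigma$ is dilational one already has $\theta=0$ by Lemma~\ref{lem:technical}(v)) by passing to a further quotient $M'=M/\Z_2$, where $\Z_2$ is generated by the isometry of $M$ induced by the central element $\zeta=(0,\theta/2,0)\in{\rm G}(\sigma)$; freeness of this $\Z_2$-action is checked using that $\mathcal{L}$ is first-order, and the enlarged group $\Gamma'=\langle\Gamma,\zeta\rangle$ has $\theta'=\theta/2$, so Lemma~\ref{lem:technical}(i) for $\Gamma'$ now reads $\Omega\in\Z\theta'$. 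This improved condition is also what powers your next claim: Lemma~\ref{lem:technical}(iv-b) does \emph{not} assert that $\Sigma$ is a lattice in $\R\times\mathcal{L}$---only that $\Lambda$ and each $\Lambda_t$ are lattices---and the paper deduces the lattice property for $\Sigma$ via an explicit $\Z$-basis (formula~\eqref{eqn:Sigma-simplified}) by comparing the Heisenberg and additive structures on $\R\times\Lambda$ through the homomorphism~\eqref{eqn:double-homo}, an argument that works only because $\Omega\in\Z\theta$ rather than merely $2\Omega\in\Z\theta$. A smaller point: your justification of $C_\gamma(\Sigma)=\Sigma$ via ``$\Sigma$ normal in $\Gamma\ni\gamma$'' presumes $\gamma\in\Gamma$, but $\gamma=(\sigma,b,w)$ is fixed in the hypothesis and need not lie in the $\Gamma$ furnished by (a).
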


\noindent \emph{Proof that {\rm (a)} implies {\rm (b):}} Assume that there is a subgroup $\Gamma$ of ${\rm G}(\sigma)$ as in (a). If $\sigma$ is dilational, then items (i), (iii), (iv-a) and (v) in Lemma \ref{lem:technical} amount to (b) with $\theta=0$, and there is nothing else to prove. If $\sigma$ is translational, we claim that replacing the compact quotient $M$ with a suitable further quotient $M' \cong M/\Z_2$, we may assume that
\begin{equation}\label{simplified_assumptions}
  \parbox{.85\textwidth}{(i) $\Omega(u,\hat{u}) \in \Z\theta$ for all $u,\hat{u} \in \Lambda$,\quad and\quad (ii) $\Sigma$ is a lattice in $\R\times\mathcal{L}$,}
\end{equation}
for $\theta$ and $\Sigma$ as in Lemma \ref{lem:technical}(i), and (\ref{eqn:objects_Gamma}-i), respectively, cf. \cite[Section 6]{DR_agag}. This completes the proof, but we justify \eqref{simplified_assumptions} below for the reader's convenience.

First, assuming that $\theta > 0$, note that the element $\zeta = (0,\theta/2,0) \in {\rm G}(\sigma)$ is central, by \eqref{eqn:OP-Gsigma} with $q=1$, and hence induces an isometry $\zeta_0$ of $M$ (equipped with the quotient metric) such that $\zeta_0 \circ \pi = \pi\circ \zeta$, with $\pi\colon \widehat{M}\to M$ being the quotient projection. As $\zeta^2\in \Gamma$, we have that \hbox{$\zeta_0^2(\pi(x)) = \pi(\zeta^2(x)) = \pi(x)$} for every $x\in \widehat{M}$, allowing us to consider the (automatically proper) isometric action of $\Z_2 = \{{\rm Id}_M,\zeta_0\}$ on $M$, which we now claim is also free. Indeed, if it were to be $\zeta_0(\pi(x)) = \pi(x)$ for some $x=(t,s,v) \in \widehat{M}$, we would have that
\begin{equation}\label{Z2free}
  \parbox{.75\textwidth}{$(0,\theta/2,0)(t,s,v) = (k,r,u)(t,s,v)$,\quad for some $(k,r,u) \in \Gamma$.}
\end{equation}
By \eqref{eqn:act_Gsigma}, comparing the $I$-components in \eqref{Z2free} immediately yields $k=0$, so that $u\in \Lambda\subseteq \mathcal{L}$, while comparing the $V$-components leads to $u(t) = 0$. Thus $u=0$, as $\mathcal{L}$ is a first-order subspace by Lemma \ref{lem:technical}(iv). However, by Lemma \ref{lem:technical}(i), $(\theta/2,0) = (r,0) \in \Z\theta\times \{0\}$ leads to $\theta=0$, which is a contradiction. Hence the quotient $M' = M/\Z_2$ is indeed smooth and compact. Writing $M' = \widehat{M}/\Gamma'$, for the subgroup $\Gamma'$ of ${\rm G}(\sigma)$ generated by $\Gamma$ and $\zeta$, we observe that $\theta' = \theta/2$ and $\Sigma'=\Sigma/2$ correspond to $\Gamma'$ as $\theta$ and $\Sigma$ corresponded to $\Gamma$, so that (\ref{simplified_assumptions}-i) follows from Lemma \ref{lem:technical}(i).

  It remains to establish (\ref{simplified_assumptions}-ii), still under the assumptions that $\sigma$ is translational and that Lemma \ref{lem:technical}\hbox{(iv-b)} holds. Fixing a basis $\{u_j\}_{j=3}^n$ of $\mathcal{L}$ which generates $\Lambda$, as well as $r_3,\ldots, r_n\in \R$ such that $(r_j,u_j) \in \Sigma$ for all $j=3,\ldots, n$, we now claim that
  \begin{equation}\label{eqn:Sigma-simplified}
    \Sigma = \Z(\theta,0) \oplus \Z(r_3,u_3) \oplus \cdots \oplus \Z(r_n,u_n).
  \end{equation}
  Denoting the right side of \eqref{eqn:Sigma-simplified} by $\Sigma''$, we first note that $\delta|_\Sigma\colon \Sigma\to \Lambda$ and $\delta|_{\Sigma''}\colon \Sigma'' \to \Lambda$ are both surjective homomorphisms, and so
  \begin{equation}\label{form-Sigma}
    \parbox{.7\textwidth}{every element of $\Sigma$ can be written in the specific form $\,(\theta,0)^\ell (r_3,u_3)^{k_3}\cdots (r_n,u_n)^{k_n}\,$, for some $\ell, k_3,\ldots, k_n\in \Z$.}
  \end{equation}Indeed, as $\delta((\theta,0)^\ell (r_3,u_3)^{k_3}\cdots (r_n,u_n)^{k_n}) = \sum_{j=3}^n k_ju_j$, one may reverse-en\-gi\-ne\-er \eqref{form-Sigma} by starting with $(r,u) \in \Sigma$, writing $u = \sum_{j=3}^n k_ju_j$ for suitable $k_3,\ldots, k_n\in \Z$, setting $(\widetilde{r},u) = (r_3,u_3)^{k_3}\cdots (r_n,u_n)^{k_n}$, and finally noting that $\delta(r,u) = \delta(\widetilde{r},u)$. (As $(\widetilde{r},u) \in \Sigma$ and $\ker(\delta|_{\Sigma}) = \Z\theta\times \{0\}$ by Lemma \ref{lem:technical}(i), it follows that $(r,u) = (\theta,0)^\ell(\widetilde{r},u)$ for some $\ell\in \Z$.)
  
  With \eqref{form-Sigma} in place, observe that the mapping \begin{equation}\label{eqn:double-homo}\R\times \Lambda \ni (r,u) \mapsto (r+\Z\theta, u) \in \R/\Z\theta \times \Lambda\end{equation} is a group homomorphism onto the direct product of $\R/\Z\theta$ and $\Lambda$, with kernel $\Z\theta\times\{0\}$, \emph{regardless of the group structure considered in $\R\times \Lambda$}: the additive structure, or the Heisenberg one. Then, given $(r,u) \in \Sigma$ written as in \eqref{form-Sigma}, consider the linear combination $(r'',u) = \ell(\theta,0) + \sum_{j=3}^n k_j(r_j,u_j)\in \Sigma''$, and observe that \eqref{eqn:double-homo} maps both $(r,u)$ --- by \eqref{eqn:OP-Gsigma} and (\ref{simplified_assumptions}-i) --- and $(r'',u)$ to the same image $(\sum_{j=3}^n k_jr_j + \Z\theta,u)$, allowing us to write $(r,u) = \ell'(\theta,0) + (r'',u)$ for some $\ell'\in \Z$ (so that $\Sigma\subseteq \Sigma''$), or $(r'',u) = (\theta,0)^{\ell''}(r,u)$ for some $\ell''\in \Z$ (so that $\Sigma''\subseteq \Sigma$). This establishes \eqref{eqn:Sigma-simplified}, and thus (\ref{simplified_assumptions}-ii), as required.

\medskip

\noindent \noindent \emph{Proof that {\rm (b)} implies {\rm (a):}} Assume that the objects $\mathcal{L}$, $\Sigma$, and $\theta$ are given as in (b). Our goal is to construct a smooth compact quotient of $\widehat{M}$. We start by showing that
\begin{equation}\label{compact_fiber}
  \parbox{.88\textwidth}{the quotient space $N = (\R\times\mathcal{L})/\Sigma$, where the lattice $\Sigma$ acts on the product \hbox{$\R\times\mathcal{L}$} by \emph{Heisenberg} left-translations, is a compact manifold.}
\end{equation}Indeed, if $\theta =0$ (or, more generally, if $\mathcal{L}$ is Lagrangian), the induced Heisenberg operation in $\R\times\mathcal{L}$ agrees with its standard vector space addition, and so \eqref{compact_fiber} follows from $\Sigma$ being a lattice in $\R\times\mathcal{L}$ (in which case $N$ is a torus). If $\theta > 0$ instead, the product $[0,\theta]\times Q$ is a compact fundamental domain for the \emph{Heisenberg} action of $\Sigma$ on $\R\times\mathcal{L}$ whenever $Q\subseteq\mathcal{L}$ is the image under the projection $\R\times \mathcal{L}\to \mathcal{L}$ of a compact fundamental domain for the \emph{additive} action of $\Sigma$ on $\R\times\mathcal{L}$: for any $(r,u) \in \R\times\mathcal{L}$ there is $(r_0,u_0) \in \Sigma$ such that $u+u_0\in Q$, and choosing $k\in \Z$ such that $k\theta+r_0+r-\Omega(u_0,u) \in [0,\theta]$, it follows from $(\theta,0)\in\Sigma$ that $(\theta,0)^k(r_0,u_0)(r,u) \in [0,\theta]\times Q$, as required.

Now, with $\varepsilon$ being $0$ or $1$ according to whether $\sigma$ in \eqref{eqn:td-dich} is translational or dilational, we let $\widetilde{w} \in \mathcal{L}$ be the unique element with $\widetilde{w}(\sigma \varepsilon) = w(\sigma\varepsilon)$ and $\widetilde{b} \in \R$ be given by $\widetilde{b} = b -\langle \dot{w}(\sigma\varepsilon)-B(\sigma\varepsilon)w(\sigma\varepsilon),w(\sigma\varepsilon)\rangle$, where $B$ corresponds to $\mathcal{L}$ as in Lemma \ref{lem:first-order-subspaces}. Note that $\widetilde{b} = b$ whenever $w=\widetilde{w}\in\mathcal{L}$. We claim that
\begin{equation}\label{phi_eq}
  \parbox{.89\textwidth}{$\phi\colon \R\times\mathcal{L} \to \R\times\mathcal{L}$ given by $\phi(r,u) = \big(q^{-1}r+\widetilde{b}-\Omega(\widetilde{w}-2w,\sigma u), \sigma u + \widetilde{w}\big)$ is a \hbox{$\Sigma$-equivariant} mapping, and induces a diffeomorphism $\Phi\colon N\to N$.}
\end{equation}
Indeed, $\sigma$-invariance of $\mathcal{L}$ says that $\phi$ is $(\R\times\mathcal{L})$-valued, while a routine computation shows that $\phi(r,u) = [{\rm C}_\gamma(r,u)](\widetilde{b},\widetilde{w})$, so that ${\rm C}_\gamma$-invariance of $\Sigma$ together with the resulting relation $\phi((r_0,u_0)(r,u)) = [{\rm C}_\gamma(r_0,u_0)] \phi(r,u)$, valid for any $(r_0,u_0)\in\Sigma$, implies $\Sigma$-equivariance of $\phi$.

We define an action of $\Z$ on $I\times N$ by setting $k\cdot (t,\Sigma(r,u)) = (\sigma^k t, \Phi^k\Sigma(r,u))$, so that the projection $I\times N \to I$ becomes $\Z$-equivariant and induces a locally trivial fibration $M \to \esf^1$, where $M = (I\times N)/\Z$. It is clear that $M$ so defined is compact. The projection $\pi\colon \widehat{M} \to M$, in turn, is then defined as the composition  \begin{equation}
    \begin{tikzcd}
\widehat{M} \arrow[r]      & I\times \R\times\mathcal{L} \arrow[r] & I\times N \arrow[r]                       & M                             \\[-1.5em]
{(t,s,v)} \arrow[r, maps to] & {(t,r,u)} \arrow[r, maps to]          & {(t,\Sigma(r,u))} \arrow[r, maps to] & {\Z\cdot(t,\Sigma(r,u)),}
\end{tikzcd}
\end{equation}where $(t,r,u)$ and $(t,s,v)$ are related via \eqref{eqn:Sigma-eq-diffeo}. Note that $\widehat{M}$ is the universal covering of $M$.

Letting $\Gamma$ be generated by $\Sigma$ and $\gamma$, it remains to be seen that the action of $\Gamma$ on $\widehat{M}$ --- given by \eqref{eqn:act_Gsigma} ---  is free and that the \hbox{$\pi$-fibers} coincide with the \hbox{$\Gamma$-orbits}. As $\gamma = (1,b,w)$ and $\Sigma$ is ${\rm C}_\gamma$-invariant, all elements of $\Gamma$ are of the form $(1,b,w)^k(0,r,u)$ with $k\in \Z$ and $(r,u) \in \Sigma$. Freeness of the $\Gamma$-action on $\widehat{M}$ is now clear: if $(1,b,w)^k(0,r,u)(t,s,v) = (t,s,v)$ for some $(t,s,v)\in \widehat{M}$, projecting onto the $I$-component yields $k=0$, so that $(0,r,u)(t,s,v)=(t,s,v)$ implies that $(r,u) = (0,0)$ by freeness of the $\Sigma$-action on each $\widehat{M}_t$ (which corresponds under \eqref{eqn:Sigma-eq-diffeo} to a manifestly-free Heisenberg action of $\Sigma$ on $\R\times\mathcal{L}$). Finally, as $\Sigma$ and $\gamma$ act on $\widehat{M}$ by deck transformations, each $\Gamma$-orbit is contained in a $\pi$-fiber; conversely, if $\pi(t,s,v) = \pi(\widehat{t},\widehat{s},\widehat{v})$, we may write $\widehat{t} = \sigma^kt$ and replace $(\widehat{t},\widehat{s},\widehat{v})$ with $(1,b,w)^{-k}(\widehat{t},\widehat{s},\widehat{v})$ if necessary to assume that $k=0$ and $t=\widehat{t}$, so that $(t,s,v)$ and $(\widehat{t},\widehat{s},\widehat{v})$ are in the same $\Sigma$-orbit (and hence in the same $\Gamma$-orbit).

Clearly, the ``type'' of $M$ (translational or dilational), equipped with its natural quotient metric, is the same as of $\sigma$ in \eqref{eqn:td-dich}; its completeness (or, incompleteness) now follows directly from \eqref{eqn:completeness_model}.

\subsection{The translational construction}

Here, we describe how Theorem \ref{teo:criterion_compact} leads to compact rank-one translational ECS manifolds \cite[Theorem A]{DT_mofm}:

\begin{theorem}\label{teo:existence_translational}
  There exist compact rank-one translational ECS manifolds of all dimensions $n\geq 5$ and all indefinite metric signatures, forming the total space of a nontrivial torus bundle over $\esf^1$ with its fibers being the leaves of $\mathcal{D}^\perp$, all geodesically complete, and none locally homogeneous. In each fixed dimension and metric signature, there is an infinite-dimensional moduli space of local-isometry types.
\end{theorem}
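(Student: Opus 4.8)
The plan is to deduce Theorem~\ref{teo:existence_translational} from the existence criterion in Theorem~\ref{teo:criterion_compact}, so the central task is to produce, in every dimension $n\geq 5$ and every indefinite signature, the algebraic data of condition (b): a standard ECS plane wave $(\widehat M,\hg)$ with $\sigma\in{\rm S}$ translational (so $\sigma=(1,p,C)$ and $I=\R$), a $\sigma$-invariant first-order \emph{Lagrangian} subspace $\mathcal L$ of $(\mathcal E,\Omega)$, and a lattice $\Sigma\subseteq\R\times\mathcal L$ with ${\rm C}_\gamma(\Sigma)=\Sigma$ and $\theta=0$ (i.e.\ $\Sigma$ meets $\R\times\{0\}$ trivially and $\Omega$ vanishes on the image $\Lambda$). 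Once such data are in hand, Theorem~\ref{teo:criterion_compact} immediately yields a compact quotient $M=\widehat M/\Gamma$ fibering over $\esf^1$ with torus fibers equal to the leaves of $\mathcal P^\perp=\mathcal D^\perp$, and the translational/complete conclusions come for free from that theorem together with \eqref{eqn:completeness_model} (recall $I=\R$). To get \emph{rank one} rather than rank two, by \eqref{eqn:rank-correction} one only needs ${\rm rank}(A)>1$, which is arranged by choosing $A$ of rank $\geq 2$; and non-local-homogeneity is exactly Proposition~(ii) of Section~\ref{subsec:td-dich}, since the resulting $M$ is translational and compact.

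First I would choose the ingredients concretely. Take $f$ to be periodic on $I=\R$, say $f(t+p)=f(t)$, and $C\in{\rm O}(V,\langle\cdot,\cdot\rangle)$ with $CAC^{-1}=A$ (e.g.\ $C$ commuting with $A$, or even $C={\rm Id}_V$); then $\sigma=(1,p,C)\in{\rm S}$ because the conditions $CAC^{-1}=q^2A$, $qt+p\in I$, $q^2f(qt+p)=f(t)$ reduce with $q=1$ to $CAC^{-1}=A$ and $f(t+p)=f(t)$. Next, via Lemma~\ref{lem:first-order-subspaces}, a $\sigma$-invariant Lagrangian first-order subspace corresponds to a smooth curve $B\colon\R\to{\rm End}(V)$ of self-adjoint operators solving the Riccati-type equation $\dot B+B^2=f+A$ with the periodicity/equivariance $B(t+p)=CB(t)C^{-1}$. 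I would seek such $B$ in a form that makes it explicit — e.g.\ $B$ taking values in a commutative subalgebra generated by $A$, reducing the matrix Riccati equation to scalar Riccati equations $\dot\beta_i+\beta_i^2=f+\lambda_i$ on the eigenspaces of $A$, where one looks for a globally-defined $p$-periodic self-adjoint solution. This is the step most likely to require care: one must arrange the scalar Riccati equations to admit a bounded (hence, by a standard averaging/fixed-point or monodromy argument, periodic) solution with the right symmetry, which is where the freedom in choosing $f$ (its sign, amplitude, mean value) is spent. I expect this to be doable by the same mechanism as in \cite{DR_agag}, and it is here that one can also encode an infinite-dimensional family of choices of $f$ (and hence of $B$, $\mathcal L$), producing the infinite-dimensional moduli statement.

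With $\mathcal L$ (equivalently $B$) fixed, the remaining step is the construction of the ${\rm C}_\gamma$-invariant lattice $\Sigma$ with $\theta=0$ and $\Omega|_\Lambda=0$. Since $\mathcal L$ is Lagrangian the Heisenberg operation on $\R\times\mathcal L$ is just vector addition, so I need an honest additive lattice of full rank $n-1$ in $\R\times\mathcal L$ that is invariant under the affine map ${\rm C}_\gamma$ (whose linear part on $\mathcal L$ is $\sigma|_{\mathcal L}$, with determinant computed by \eqref{eq:detT}). The natural route: on $\mathcal L$ choose a $\sigma|_{\mathcal L}$-invariant lattice $\Lambda\subseteq\mathcal L$ — which exists precisely when $\sigma|_{\mathcal L}$ is conjugate to an integer matrix, an arithmeticity condition I would meet by tuning $p$ (and $C$) so that $\det(\sigma|_{\mathcal L})=\pm1$ and the characteristic polynomial of $\sigma|_{\mathcal L}$ has integer coefficients — then pick one lift of each generator of $\Lambda$ into $\R\times\mathcal L$ and throw in $(\theta,0)$ for a suitable $\theta$; but since we want $\theta=0$, we instead need $\Omega$ to vanish identically on $\Lambda$, which holds automatically if we shrink to a sublattice inside a Lagrangian $\mathcal L$ — indeed $\Omega|_{\mathcal L\times\mathcal L}=0$ by definition of Lagrangian, so $\Omega(u,\hat u)=0\in\Z\cdot 0$ for all $u,\hat u\in\Lambda\subseteq\mathcal L$, and one just takes $\Sigma=\{0\}\times\Lambda$ after checking $\delta_t(\Lambda)$ spans $V$ (first-order property) so that $\Sigma$ is a full lattice in $\R\times\mathcal L$ only if we additionally adjoin one element with nonzero $\R$-component — here $\theta=0$ forces us instead to note $\R\times\mathcal L$ has dimension $n-1=\dim\mathcal L+1$, so a rank-$(n-1)$ lattice projecting isomorphically onto $\Lambda$ under $\R\times\mathcal L\to\mathcal L$ is what condition (b)/Lemma~\ref{lem:technical}(iv-a) asks for; such a lattice is obtained by choosing real numbers $r_i$ with $(r_i,u_i)$ generating $\Sigma$ and using ${\rm C}_\gamma$-invariance to pin them down, exactly as in \eqref{eqn:Sigma-simplified}. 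The honest obstacle across the whole argument is the simultaneous satisfaction of the two arithmetic-type constraints — periodicity of $B$ under $t\mapsto t+p$ \emph{and} $\sigma|_{\mathcal L}$ being conjugate to a unimodular integer matrix — for a \emph{continuum} of choices of $(V,\langle\cdot,\cdot\rangle,A,f)$ in each dimension and signature; I would resolve it, following \cite{DT_mofm}, by a dimensional reduction that builds the $n$-dimensional example from a fixed low-dimensional ``core'' carrying the nontrivial $A$ and the arithmetic data, padded out by extra flat directions to reach arbitrary $n$ and signature, with the periodic function $f$ varying freely in an infinite-dimensional family. Finally, the bundle is nontrivial because its total space $M$ is not locally homogeneous while a trivial torus bundle over $\esf^1$ would be, or more directly because the monodromy $\sigma|_{\mathcal L}$ (equivalently $\Phi$ in \eqref{phi_eq}) can be chosen to act nontrivially on $H_1$ of the torus fiber.
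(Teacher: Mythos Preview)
Your overall framework---reduce to Theorem~\ref{teo:criterion_compact}(b), realize the $\sigma$-invariant Lagrangian first-order subspace $\mathcal L$ via a periodic self-adjoint Riccati solution $B$, and then build the lattice---matches the paper. But the execution differs from the paper at the most delicate point, and your version has a genuine gap there.

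The paper's key move is the \emph{reversal} you did not make: rather than fixing $f$ and $A$ and then trying to find a $p$-periodic self-adjoint solution $B$ of $\dot B+B^2=f+A$ (a Hill--Riccati problem whose solvability is not at all automatic), the paper chooses $B\colon \esf^1\to\Delta^{n-2}$ first and \emph{defines} $f$ and $A$ as the scalar and traceless parts of $\dot B+B^2$. The three requirements in \eqref{want_B} then become conditions on $B$ alone, and the heart of the argument is an Inverse Function Theorem step (\cite[Theorem~6.2]{DT_mofm}, plus a smoothing retraction) showing that for any $\varTheta\in\Delta^{n-2}$ whose entries satisfy the arithmetic constraint \eqref{arit_prop}, there is an infinite-dimensional family of diagonal $B$ with $\exp(-\int_{\esf^1}B)=\varTheta$, traceless part of $\dot B+B^2$ a nonzero constant, and scalar part nonconstant. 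Your plan---fix $f$, reduce to scalar Riccati on eigenspaces, and ``hope'' for a bounded hence periodic solution---does not supply a mechanism guaranteeing periodicity (the monodromy of $\ddot u=(f+\lambda_i)u$ need not be hyperbolic, and even when it is, the arithmetic lattice condition on $\sigma|_{\mathcal L}$ is decoupled from whatever $f$ you picked). The ``padding with flat directions'' idea is also not what the paper does and is in tension with \eqref{arit_prop}, which explicitly forbids eigenvalue $1$; the paper instead produces, for every $n\ge5$, a matrix in ${\rm GL}_{n-2}(\Z)$ with mutually distinct eigenvalues satisfying \eqref{arit_prop}, diagonalizes it to obtain $\varTheta$, and runs the IFT argument from there.

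A secondary confusion: you insist on $\theta=0$, but in the translational construction the paper takes $\theta>0$ and $\Sigma=\Z\theta\times\Lambda$ with $\Lambda\subseteq\mathcal L$ a genuine lattice of rank $n-2$ (the Lagrangian condition makes the $\Omega$-constraint vacuous for any $\theta$). With $\theta=0$ your $\Sigma$ cannot be a full-rank lattice in $\R\times\mathcal L$ while projecting isomorphically onto a rank-$(n-2)$ lattice in $\mathcal L$, which is why your lattice paragraph becomes tangled.
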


The proof strategy consists in \emph{simultaneously} producing a standard ECS plane wave $(\widehat{M},\hg)$ and a suitable subgroup $\Gamma$ of ${\rm G}(\sigma)$, where the element $\sigma \in {\rm S}$ is translational and of the form $\sigma = (1,p,{\rm Id}_V)$ for some $p>0$. Requiring the ${\rm O}(V,\langle\cdot,\cdot\rangle)$-component of $\sigma$ to be trivial turns out to be a mild assumption which can be generically arranged for --- see Section \ref{sec:class_res}. As in \eqref{eqn:quot_I-Gam}, we set $\esf^1 = \R/p\Z$, so that $p$-periodic functions defined on $\R$ (and valued anywhere) may be regarded as defined on $\esf^1$, and integration from $0$ to $p$ becomes integration over $\esf^1$.

Our goal is to find a $\sigma$-invariant first-order subspace $\mathcal{L}$ of $(\mathcal{E},\Omega)$, a con\-ju\-ga\-ti\-on-in\-va\-ri\-ant lattice $\Sigma \subseteq \R\times\mathcal{L}$, and some $\theta \geq 0$ satisfying certain additional conditions. By Lemma \ref{lem:first-order-subspaces}, such $\mathcal{L}$ corresponds to a smooth curve $B\colon \esf^1 \to {\rm End}(V)$ solving the differential equation $\dot{B}+B^2=f+A$. This is a crucial point: the objects $f$ and $A$ needed to define the metric $\hg$ are in fact \emph{determined} by $B$, as the scalar and traceless parts of $\dot{B}+B^2$. In addition, if each $B(t)$ is self-adjoint, $\mathcal{L}$ becomes Lagrangian and the only condition relating $\Sigma$ and $\theta$ simply reads $\Sigma = \Z\theta \times \Lambda$. Lemma \ref{lem:technical} suggests that $\Lambda$ ought to arise as the lattice preserved by $\sigma|_{\mathcal{L}} = \exp\big(-\int_{\esf^1}B\big)$ acting on $\mathcal{L}$.

In summary, it suffices to obtain a curve $B\colon \esf^1\to {\rm End}(V)$ of self-adjoint endomorphisms of $(V,\langle\cdot,\cdot\rangle)$ for which \vspace{-1em}
\begin{equation}\label{want_B}
\vspace{-1em}  \parbox{.9\textwidth}{\begin{enumerate}[(i)]
\item the trace of $\dot{B}+B^2$ is nonconstant.
\item the traceless part of $\dot{B}+B^2$ is a nonzero constant.
\item the operator $\exp\big(-\int_{\esf^1}B\big)$ maps a lattice $\Lambda\subseteq\mathcal{L}$ onto itself.
\end{enumerate}}
\end{equation}
Conditions (\ref{want_B}-i) and (\ref{want_B}-ii) ensure, respectively, that the resulting standard ECS plane wave $(\widehat{M},\hg)$ is not locally symmetric or conformally flat. Then, we \emph{define} $f$ and $A$ as the scalar and traceless parts of $\dot{B}+B^2$, the lattice $\Sigma$ as the product $\Z\theta \times \Lambda$, and finally apply Theorem \ref{teo:criterion_compact}. 

To narrow down the search for $B$ we assume that $V = \R^{n-2}$, let $\Delta^{n-2} \cong \R^{n-2}$ denote the space of diagonal matrices of order $n-2$, and consider the arithmetic condition imposed on $(\lambda_1,\ldots,\lambda_{n-2})\in \R^{n-2}$:
\begin{equation}\label{arit_prop}
  \parbox{.5\textwidth}{$\{\lambda_1,\ldots,\lambda_{n-2}\} \subseteq (0,\infty)\smallsetminus \{1\}$ is not of the form $\{\lambda\}$ or $\{\lambda,\lambda^{-1}\}$, for any $\lambda>0$.}
\end{equation}
Whenever the entries of $\varTheta\in\Delta^{n-2}$ satisfy \eqref{arit_prop}, there is an infinite-dimensional submanifold of ${\rm C}^\infty(\esf^1)$ whose elements can be realized as \hbox{$(n-2)^{-1}{\rm tr}(\dot{B}+B^2)$} for some $B\in {\rm C}^\infty(\esf^1,\Delta^{n-2})$ having $\exp(-\int_{\esf^1}B) = \varTheta$ and the traceless part of \hbox{$\dot{B}+B^2$} be a nonzero constant \cite[Theorem 6.2]{DT_mofm}. The proof of this fact combines a well-placed application of the Inverse Function Theorem in the lower-regularity Banach spaces ${\rm C}^k(\esf^1,\Delta^{n-2})$, and the existence of local smooth-pre\-ser\-ving retractions \cite[Lemma 5.1]{DT_mofm}, needed to deform the resulting ${\rm C}^k$ curves $B$ into ${\rm C}^\infty$ ones. This takes care of conditions (\ref{want_B}-i) and (\ref{want_B}-ii). As for (\ref{want_B}-iii), it suffices to note that for every $n\geq 5$ there is a matrix in ${\rm GL}_{n-2}(\Z)$ with mutually distinct eigenvalues satisfying \eqref{arit_prop}, cf. \cite[Lemma 4.1]{DT_mofm}, and hence conjugate to some $\varTheta\in \Delta^{n-2}$ --- which is then used as the starting point of the preceding argument.

\subsection{The dilational construction}\label{dil_const}

This time, we describe how Theorem \ref{teo:criterion_compact} leads to odd-dimensional compact rank-two dilational ECS manifolds.

  \begin{theorem}\label{teo:exist_dil_examples}
  There exist compact rank-two dilational ECS manifolds of all odd dimensions $n\geq 5$ and with semi-neutral metric signature, including locally homogeneous ones, forming the total space of a nontrivial torus bundle over $\esf^1$ with its fibers being the leaves of $\mathcal{P}^\perp$, all of them geodesically incomplete. In each fixed odd dimension, there is an infinite-dimensional moduli space of local-isometry types.
\end{theorem}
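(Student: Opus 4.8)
The plan is to follow the blueprint of the translational construction (the proof of Theorem \ref{teo:existence_translational}), but now working with a \emph{dilational} $\sigma = (q,0,C)\in {\rm S}$ on the interval $I = (0,\infty)$ with $q > 1$, and arranging for $\operatorname{rank}(A) = 1$ so that \eqref{eqn:rank-correction} gives rank two. By Theorem \ref{teo:criterion_compact}, it suffices to produce, \emph{simultaneously}, a standard ECS plane wave $(\widehat{M},\hg)$ and a $\sigma$-invariant first-order subspace $\mathcal{L}$ of $(\mathcal{E},\Omega)$ together with a ${\rm C}_\gamma$-invariant lattice $\Sigma \subseteq \R\times\mathcal{L}$; by Lemma \ref{lem:technical}(v), in the dilational case one automatically lands in case (iv-a), so $\mathcal{L}$ must be Lagrangian and $\theta = 0$, meaning $\Sigma$ is just an honest lattice in the vector space $\R\times\mathcal{L}$ projecting isomorphically onto $\Lambda$. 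As in the translational case, the key conceptual move is that the data $f$ and $A$ defining $\hg$ are not chosen first but are \emph{read off} from $\mathcal{L}$: by Lemma \ref{lem:first-order-subspaces} a Lagrangian first-order $\mathcal{L}$ corresponds to a curve $B\colon I \to {\rm End}(V)$ of self-adjoint operators solving $\dot B + B^2 = f + A$, and $f$, $A$ are then defined as the scalar and traceless parts of $\dot B + B^2$.

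Concretely, I would first exploit the scaling symmetry forced by dilationality. The constraint from Lemma \ref{lem:first-order-subspaces}(ii), $B(\sigma t) = q^{-1} C B(t) C^{-1}$ with $\sigma t = qt$, together with $f(t) = q^2 f(qt)$ and $CAC^{-1} = q^2 A$, strongly suggests the ansatz $f(t) = c/t^2$ for a constant $c$ and $B(t) = t^{-1} \beta$ for a fixed self-adjoint operator $\beta$ commuting appropriately with $C$; then $\dot B + B^2 = t^{-2}(\beta^2 - \beta)$, whose traceless part is a constant multiple of a fixed operator $A$ with $\operatorname{rank}(A) = 1$ chosen so the arithmetic works out. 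I would then need: (i) that the traceless part of $\beta^2 - \beta$ is a nonzero rank-one self-adjoint operator (this forces the semi-neutral signature, since a rank-one traceless self-adjoint operator with the requisite $\sigma$-equivariance exists only in neutral-type signatures — this is where the signature restriction and the oddness of $n$ enter, cf.\ the narrow class in \eqref{eqn:rank-correction}); (ii) that $\sigma|_{\mathcal{L}} = \exp\big(-\int_1^q B(t)\,{\rm d}t\big) = \exp(-(\log q)\beta) = q^{-\beta}$, via \eqref{eq:detT}'s underlying fundamental-solution computation, preserves a lattice $\Lambda \subseteq \mathcal{L} \cong V$; this reduces to finding $\beta$ (equivalently a hyperbolic-type element $q^{-\beta} \in {\rm GL}(V)$) conjugate to an integer matrix, exactly the arithmetic input used in \cite[Lemma 4.1]{DT_mofm}. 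The $\Sigma$-part is then handled as in the translational case: take $\Sigma = \Z r_0 \times \Lambda$ or, since $\theta = 0$ and $\mathcal{L}$ is Lagrangian, a lattice in $\R\times\mathcal{L}$ whose $\R$-component is adapted so that ${\rm C}_\gamma$-invariance holds — here one uses that $\gamma = (\sigma, b, w)$ acts on $\R\times\mathcal{L}$ by an affine map whose linear part is $q^{-\beta}$ up to the Heisenberg twist, so a ${\rm C}_\gamma$-stable lattice exists once $q^{-\beta}$ stabilizes $\Lambda$ and the cocycle is rational.

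For the moduli statement — an infinite-dimensional family of local-isometry types in each odd dimension — I would perturb away from the rigid scale-invariant ansatz: replace $B(t) = t^{-1}\beta$ by $B(t) = t^{-1}\beta + (\text{correction})$ that still satisfies the $\sigma$-equivariance $B(qt) = q^{-1} C B(t) C^{-1}$, which amounts to prescribing $B$ freely (in the self-adjoint, correctly-equivariant class) on a fundamental domain $[1,q]$ subject only to a matching condition at the endpoints, plus the requirements that the traceless part of $\dot B + B^2$ remain the fixed nonzero rank-one $A$ and that the scalar part $f$ stay nonconstant. This is the analogue of \cite[Theorem 6.2]{DT_mofm}, and I would run the same machinery: an Inverse Function Theorem argument in the Banach spaces ${\rm C}^k([1,q],\,\cdot\,)$ to solve the nonlinear constraint $\operatorname{tracefree}(\dot B + B^2) = A$ with $\exp(-\int B)$ fixed at $q^{-\beta}$, followed by the smoothing retraction of \cite[Lemma 5.1]{DT_mofm} to upgrade ${\rm C}^k$ solutions to ${\rm C}^\infty$ ones; the free scalar datum $f$ then furnishes the infinite-dimensional parameter, and distinct $f$'s give distinct local-isometry types because $f$ is (up to affine reparametrization of $t$, which is now pinned down by the $t^{-2}$ normalization forced in the non–locally-homogeneous sub-case) an isometry invariant.

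The main obstacle I expect is precisely item (i) above: producing a \emph{self-adjoint} $\beta$ on an indefinite $(V,\langle\cdot,\cdot\rangle)$ such that (a) the traceless part of $\beta^2 - \beta$ has rank exactly one — which is a delicate algebraic constraint, essentially forcing $V$ to decompose with a null-related pair absorbing the rank-one operator, hence the semi-neutral signature and a dimension parity restriction — and simultaneously (b) $q^{-\beta}$ is conjugate into ${\rm GL}(V \cap \Z\text{-lattice})$ for a suitable $q$, i.e.\ the spectrum of $\beta$ consists of logarithms (base $q$) of the eigenvalues of an integer matrix, with those eigenvalues obeying the reciprocal-pairing dictated by self-adjointness in neutral signature. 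Reconciling the "rank-one traceless part" algebra with the "stabilizes an integer lattice" number theory — and checking this is possible for every odd $n \geq 5$ but genuinely uses oddness — is the crux; the locally homogeneous examples correspond to the sub-case where $f(t) = c/t^2$ is kept exactly (no perturbation), matching \cite[Theorem 6.1]{DT_advg}, whereas the perturbed $f$'s yield the non–locally-homogeneous members of the moduli space, consistent with \cite[Theorem B.1]{DT_advg}.
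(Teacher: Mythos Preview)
Your base ansatz $B(t)=t^{-1}\beta$ does not produce an ECS plane wave: it gives $\dot B+B^2=t^{-2}(\beta^2-\beta)$, whose traceless part is $t^{-2}\cdot\mathrm{traceless}(\beta^2-\beta)$, a $t$-dependent operator. In the standard model \eqref{eqn:model} the operator $A$ is \emph{constant}, so equating traceless parts forces $\mathrm{traceless}(\beta^2-\beta)=0$, i.e.\ $A=0$, and the manifold is conformally flat. Thus the very first step --- ``read off $A$ as the traceless part of $\dot B+B^2$ for $B=t^{-1}\beta$'' --- collapses, and there is no rank-one $A$ to build the rest of the argument on. Your later perturbation scheme is designed to fix $A$ and vary $f$, so it cannot repair this. (Two smaller slips: with $\theta=0$, Lemma~\ref{lem:technical}(iv-a) says $\Sigma$ projects \emph{isomorphically} onto $\Lambda$, so it is never a product $\Z r_0\times\Lambda$; and the operator whose lattice-invariance you need is ${\rm C}_\gamma|_{\R\times\mathcal{L}}$, which carries an extra $q^{-1}$ eigenvalue on the $\R$-factor, not just $\sigma|_{\mathcal{L}}$.)

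The paper's construction runs in the \emph{opposite} order, as the text itself flags: rather than deriving $f,A$ from a curve $B$, it writes down $A$ (nilpotent of rank one), $C$ (diagonal with entries $q^{a(i)}$), and $f(t)=(n^2-1)/(4t^2)$ explicitly from the outset, and then builds $\mathcal{L}$ directly as a span $\bigoplus_{i\in S_1}\R u_i$ of selected \emph{eigenvectors} of $\sigma$ acting on $\mathcal{E}$ --- bypassing Lemma~\ref{lem:first-order-subspaces} altogether. The eigenvalues of $\sigma$ on $\mathcal{E}$ are $q^{E(1)},\ldots,q^{E(2m)}$ for an integer-valued ``exponent function'' $E$; the combinatorial device that makes the selection $S_1$ yield a Lagrangian first-order subspace, and simultaneously forces the spectrum of ${\rm C}_\gamma|_{\R\times\mathcal{L}}$ to be $(q^a)_{a\in Y}$ with $Y\subseteq\Z$ \emph{symmetric about~$0$}, is a ``$\Z$-spectral system'' --- and its existence is exactly where the oddness of $n$ is used. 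Once $Y$ is symmetric about $0$, choosing $q$ with $q+q^{-1}\in\Z$ makes the characteristic polynomial of ${\rm C}_\gamma|_{\R\times\mathcal{L}}$ integral, which supplies the invariant lattice $\Sigma$. The infinite-dimensional moduli then comes from deforming $f$ (not $B$) while holding the two characteristic multipliers $\mu^\pm$ fixed.
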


Here, by a pseudo-Riemannian metric with \emph{semi-neutral} metric signature, we mean one for which the difference between its positive and negative indices is at most one.

As before, the construction involves two different aspects: one analytical for $f$ and $A$, and one combinatorial for $\Gamma$. The difference is that while the arithmetic condition \eqref{arit_prop} was simple, and finding $f$ and $A$ required deforming constant solutions of an ordinary differential equation, the roles are now reversed: $f$ and $A$ arise from the very explicit formulas \eqref{AandC} and \eqref{f}, and the existence of $\Sigma$ and $\mathcal{L}$ is established via an elaborate combinatorial structure, only available in odd dimensions \cite[Section 2]{DT_advg}.

More precisely, a \emph{$\Z$-spectral system} is a quadruple $(m,k,E,J)$ where $m,k\geq 2$ are integers and, setting $\mathcal{V} = \{1,\ldots,2m\}$, $E\colon \mathcal{V} \to \Z\smallsetminus \{-1\}$ and $J\colon \mathcal{V}\to \{0,1\}$ are functions, with $E$ injective, such that: \vspace{-1em}
\begin{equation}\label{Z-spec}
\vspace{-1em}  \parbox{.9\textwidth}{
    \begin{enumerate}[(i)]
    \item $k+1=2E(1)$ (thus making $k$ odd).
    \item $E(i)+E(i')=-1$ and $J(i)+J(i')=1$ whenever $i+i'=2m+1$.
    \item $E(i)-E(i') = k$ and $J(i)+J(i')=1$ whenever $i'=i+1$ is even.
    \item the set $Y = \{-1\}\cup \{E(i)\}_{i\in J^{-1}(1)}$ is symmetric about zero.
    \end{enumerate}}
\end{equation}
The \emph{spectral selector} $S_1=J^{-1}(1)$ is simultaneously a selector for both families \vspace{-1em}
\begin{equation}\label{eqn:selector}
\vspace{-1em}  \parbox{.9\textwidth}{
    \begin{enumerate}[(i)]
    \item $\{ \{i,i'\}\in\wp_2(\mathcal{V}) \mid i+i'=2m+1\}$ and
    \item $\{\{i,i'\} \in \wp_2(\mathcal{V}) \mid i'=i+1\mbox{ is even}\}$,
    \end{enumerate}}
\end{equation}
where $\wp_2(\mathcal{V})$ stands for the collection of two-element subsets of $\mathcal{V}$. The function $E$, in turn, is to be thought of as an `exponent function', which will give rise to the correct spectrum for the dilational element $\sigma \in {\rm S}$ to be described below.

Let $q\in (0,\infty)\smallsetminus \{1\}$ and $n\geq 5$ be odd, set $m=n-2$, and fix a \hbox{$\Z$-spectral} system $(m,n,E,J)$ \cite[Theorem 2.2]{DT_advg}. In addition, let $(V,\langle\cdot,\cdot\rangle)$ be a $m$-dimensional pseudo-Euclidean vector space of semi-neutral metric signature and, for the scalars $a(1),\ldots, a(m)\in \R$ given by $a(i) = E(2i-1) + (1-n)/2$, for all $i$, we define $A$ and $C$ by 
\begin{equation}\label{AandC}
  \parbox{.75\textwidth}{$Ae_m = e_1$ and $Ae_i=0$ for $i<m$, and $Ce_i = q^{a(i)}e_i$ for all $i$.}
\end{equation}
In \eqref{AandC}, $(e_1,\ldots, e_m)$ is a basis for $V$ with the property that, for some $\varepsilon = \pm 1$, $\langle e_i,e_k\rangle = \varepsilon \delta_{ij}$ for all $i,j\in \{1,\ldots,m\}$, where $k=m+1-j$ and we set $e_0=0$. With this, we have that $A$ is self-adjoint, $C\in {\rm SO}(V,\langle\cdot,\cdot\rangle)$, and the relation $CAC^{-1}=q^2A$ holds (cf. Section \ref{sec:full_iso}).

The element $\sigma = (q,0,C)$, regarded as an endomorphism of $\mathcal{E}$ via \hbox{(\ref{eq:H_actions}-iii)}, has the spectrum $(\mu^+q^{a(1)}, \mu^-q^{a(1)},\ldots, \mu^+q^{a(m)},\mu^-q^{a(m)})$, where $(\mu^+,\mu^-)$ is the spectrum of the endomorphism $y\mapsto (t\mapsto y(t/q))$ on the space of solutions $y\colon (0,\infty)\to \R$ of the ordinary differential equation $\ddot{y} = fy$. As a consequence,
\begin{equation}\label{spec-sigma}
  \parbox{.5\textwidth}{when $\mu^\pm = q^{(-1\pm n)/2}$, the spectrum of $\sigma$ becomes precisely $(q^{E(1)},\ldots, q^{E(2m)})$,}
\end{equation}
cf. \cite[formula (4.2)]{DT_advg}. Condition \eqref{spec-sigma} can be achieved, for instance, by choosing the function \begin{equation}\label{f}f(t) = \frac{n^2-1}{4t^2}.\end{equation} We may also require the eigenfunctions associated with $\mu^+$ and $\mu^-$ to be positive. A straightforward computation --- see Theorem \ref{lem:full_iso_model} --- shows that this choice of $f$ makes the resulting standard ECS plane wave $(\widehat{M},\hg)$ homogeneous.

Let $(u_1,\ldots,u_{2m}) = (u_1^+,u_1^-,\ldots, u_m^+,u_m^-)$ be a basis of $\mathcal{E}$ consisting of eigenvectors of $\sigma$, associated with the eigenvalues $q^{E(1)},\ldots, q^{E(2m)}$, with the property that $\Omega(u_i,u_j) = 0$ whenever $i,j\in \{1,\ldots,2m\}$ have $i+j\neq 2m+1$, whose existence is ensured by \cite[Lemma 4.1]{DT_advg}. It now follows that the direct sum $\mathcal{L} = \bigoplus_{i\in S_1} \R u_i$ is a first-order \hbox{$\sigma$-in\-va\-ri\-ant} Lagrangian subspace of $(\mathcal{E},\Omega)$: \hbox{$\sigma$-invariance} is obvious, being Lagrangian is a consequence of $S_1$ being a selector for (\ref{eqn:selector}-i), and being first-order follows from the matrix representing $\sigma|_{\mathcal{L}}$ being upper triangular and with positive diagonal entries.

Finally, fix any $\gamma = (\sigma,b,w) \in {\rm G}(\sigma)$, and consider its conjugation mapping ${\rm C}_\gamma$, cf. Remark \ref{rem:identify_H}. The product $\R\times\mathcal{L}$ is ${\rm C}_\gamma$-invariant, since $\mathcal{L}$ is $\sigma$-invariant, and the spectrum of the restriction ${\rm C}_\gamma|_{\R\times\mathcal{L}}$ is given by $(q^a)_{a\in Y}$, for the set $Y$ in \hbox{(\ref{Z-spec}-iv)}. The existence of a lattice $\Sigma \subseteq \R\times\mathcal{L}$ which is mapped onto itself by ${\rm C}_\gamma$ is guaranteed whenever $q+q^{-1}\in \Z$ \cite[Remark 2.1]{DT_advg}, so starting the argument with such a value of $q$ allows us to apply Theorem \ref{teo:criterion_compact}.

The resulting compact quotients are locally homogeneous due to the choice of $f$ in \eqref{f}, but we have an infinite-dimensional freedom to deform it while keeping $\mu^+$ and $\mu^-$ the same \cite[Theorem A.1]{DT_advg}. For this reason we obtain dilational but non locally-homogeneous compact quotients as well.

\section{The bundle structure}\label{eqn:bundle_structure}

All known compact rank-one ECS examples share a specific topological feature: they are all bundles over $\esf^1$ in such a way that $\mathcal{D}^\perp$ appears as the vertical distribution. This is not accidental \cite[Theorem A]{DT_pems}:

\begin{theorem}\label{teo:bundle_pems}
  Every non locally-homogeneous compact rank-one ECS manifold $(M,\g)$ for which the orthogonal distribution $\mathcal{D}^\perp$ is transversely orientable is the total space of a locally trivial fibration over $\esf^1$ whose fibers are the leaves of $\mathcal{D}^\perp$. The conclusion remains valid in the locally homogeneous case if, in addition, one assumes that $\mathcal{D}^\perp$ has at least one compact leaf.
\end{theorem}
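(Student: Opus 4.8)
The plan is to reduce Theorem \ref{teo:bundle_pems} to the codimension-one foliation $\mathcal{V} = \mathcal{D}^\perp$ and the associated dichotomy property. Since $(M,\g)$ is rank one, $\mathcal{D}$ is null, parallel, and one-dimensional, so $\mathcal{D}^\perp$ is a parallel (hence integrable) distribution of codimension one; transverse orientability means $\mathcal{D}^\perp = \ker\omega$ for some nowhere-vanishing $1$-form $\omega$ on $M$. Passing to the universal cover $(\widetilde M,\tg)$ and using \eqref{eqn:flat_D_Dquot} together with local isometry to a standard ECS plane wave, I would fix the function $t\colon\widetilde M\to I$ as in \eqref{t}, whose gradient is null, parallel, and spans $\widetilde{\mathcal{D}}$; then the level sets of $t$ are exactly the leaves of $\widetilde{\mathcal{D}}^\perp$, and for each deck transformation $\gamma$ one has $t\circ\gamma = q_\gamma t + p_\gamma$ as in \eqref{eqn:two_homs}. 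The first main step is to establish that the foliation $\mathcal{V}=\mathcal{D}^\perp$ on the compact manifold $M$ satisfies the dichotomy: either it has a compact leaf (equivalently, $t$ descends, up to scaling, to a fibration $M\to\esf^1$ with connected fibers equal to the leaves of $\mathcal{D}^\perp$), or every leaf is dense. This is precisely the codimension-one dichotomy property from \cite{D_rjm} applied to $\mathcal{V}$, which the excerpt says is established for $\mathcal{V}=\mathcal{D}^\perp$; I would invoke it here.

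The heart of the argument is then to rule out the all-leaves-dense alternative in the non-locally-homogeneous case. The strategy mirrors the proof of part (ii) of the Proposition: from \eqref{eqn:Ric_f} there is a smooth $f\colon\widetilde M\to\R$, locally a function of $t$, with ${\rm Ric} = (2-n)f(t)\,{\rm d}t\otimes{\rm d}t$ and $\Gamma$-invariance yielding $q_\gamma^2 f(q_\gamma t+p_\gamma) = f(t)$ as in \eqref{eqn:Ric-inv}. If $\mathcal{D}^\perp$ has a dense leaf, then $t$ does \emph{not} factor through a circle, so the level sets of $t$ need not be connected and $f$ need not be a global function of $t$; however, one can still extract a $\Gamma$-invariant geometric object. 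The key is that, since the compact manifold is assumed non-locally-homogeneous, the self-adjoint operator $A$ of \eqref{eqn:A} is not covariantly forced to be "spread out" uniformly — more precisely, I would argue that $\|A\|$ (with respect to $\langle\cdot,\cdot\rangle$ on $V$, measured via the structure constants in Brinkmann coordinates) or an analogous scalar invariant built from $A$ and $f$ is a non-constant function on $M$; then its levels, or the zero/critical sets of its derivative along $\mathcal{D}^\perp$, produce a nonempty proper closed $\mathcal{D}^\perp$-saturated subset of $M$, contradicting density of every leaf. Equivalently, one shows the holonomy of $\mathcal{D}^\perp$ preserves a transverse structure incompatible with minimality unless the metric is locally homogeneous.

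Once the dense-leaf case is excluded, $\mathcal{D}^\perp$ has a compact leaf, and by the dichotomy all leaves are compact and $t$ (rescaled) descends to a locally trivial fibration $M\to I/\Gamma\cong\esf^1$ with fibers the leaves of $\mathcal{D}^\perp$; local triviality follows from Ehresmann's theorem exactly as in the proof of Lemma \ref{lem:technical}(iii). For the locally homogeneous addendum, the exclusion step is unavailable (indeed locally homogeneous examples with dense leaves are not ruled out in general), so one simply assumes a compact leaf exists and feeds that directly into the dichotomy to conclude. I expect the main obstacle to be the second step: producing, in the non-locally-homogeneous case, a genuinely $\Gamma$-invariant scalar or tensorial invariant that is non-constant on $M$ and whose level/critical structure obstructs minimality of $\mathcal{D}^\perp$ — the naive candidate $|f|^{1/2}\,{\rm d}t$ used in the Proposition is automatically $\Gamma$-invariant only after one already knows the leaves are connected, so a more robust invariant (likely built from the "Weyl-normalized Ricci" data, i.e. from $f$ relative to $A$, which transforms homogeneously under $\gamma$) must be identified and its descent to $M$ justified.
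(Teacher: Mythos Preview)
Your outline has two genuine gaps, and together they leave the core of the argument missing.

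First, you have misread what the paper's \emph{dichotomy property} is. It is not the global alternative ``either $\mathcal{D}^\perp$ has a compact leaf, or every leaf is dense''; it is a \emph{local} statement about the behaviour of $\mathcal{D}^\perp$ near each compact leaf $L$: either nearby leaves are all noncompact (NC), or they are all compact with a product neighbourhood (AC). Establishing this property requires identifying a tubular neighbourhood of $L$ with a neighbourhood of the zero section in $\mathcal{D}^*_L$ so that $\mathcal{D}^\perp$ becomes the horizontal distribution of the induced flat connection; (AC) versus (NC) then corresponds to finiteness versus infiniteness of that holonomy. Consequently, producing \emph{some} compact leaf is not enough: you must produce one satisfying (AC). Your sentence ``by the dichotomy all leaves are compact and $t$ descends to a fibration'' short-circuits exactly this point.

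Second, and more seriously, your proposed mechanism for manufacturing a non-constant $\mathcal{D}^\perp$-saturated scalar on $M$ does not work. The operator $A$ in \eqref{eqn:A} is a single linear endomorphism of the finite-dimensional vector space $V$ of parallel sections of $\widetilde{\mathcal{D}}^\perp/\widetilde{\mathcal{D}}$; any norm of $A$ is a \emph{number}, not a function on $M$, so $\|A\|$ cannot play the role you want. You correctly note that $|f|^{1/2}\,{\rm d}t$ is only known to descend \emph{after} connectedness of the $t$-levels is established, which is circular. The paper's actual device is cohomological and is the missing idea: one considers the space $\mathcal{F}$ of continuous $\chi$ on $\widetilde{M}$ for which $\chi\,{\rm d}t$ is closed and $\Gamma$-invariant, together with the map $P\colon\mathcal{F}\to H^1_{\rm dR}(M)$, $P\chi=[\chi\,{\rm d}t]$. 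One shows that $\dim\mathcal{F}<\infty$ forces $f(t)=\varepsilon(t-b)^{-2}$ and hence local homogeneity; otherwise $\dim\mathcal{F}=\infty>\dim H^1_{\rm dR}(M)$, so some nonzero $\chi$ lies in $\ker P$, and an antiderivative $\mu\in{\rm C}^1(M)$ of the resulting exact form is the desired non-constant function constant along $\mathcal{D}^\perp$. Sard's theorem (applicable because $\mu$ is locally a function of $t$) then yields an open interval of regular values, each of whose level components is a compact leaf satisfying (AC). None of this is present in your sketch, and the vague appeal to ``Weyl-normalized Ricci data'' does not substitute for it.
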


Observe that this generalizes \cite[Theorem B]{DR_jgp} from the Lorentzian signature, where the locally homogeneous case was ruled out (cf. Section \ref{subsec:td-dich}). The proof of Theorem \ref{teo:bundle_pems} however, uses different tools and clarifies the geometric role of $\mathcal{D}^\perp$. The precise differences between the new and old proofs in the Lorentzian case are explained in the Appendix of \cite{DT_pems}.

The central concept used in what follows is what we call the \emph{dichotomy property} \cite{D_rjm} for a codimension-one foliation $\mathcal{V}$ in a smooth manifold $M$, which has two alternatives (NC) and (AC) imposed on its compact leaves. Namely, $\mathcal{V}$ has the dichotomy property if every compact leaf $L$ of $\mathcal{V}$ has a neighborhood $U$ in $M$ such that the leaves of $\mathcal{V}$ intersecting $U\smallsetminus L$ are either:  \vspace{-1em}
\[\vspace{-1em} \parbox{.9\textwidth}{\begin{itemize}
\item[(NC)] all noncompact, or
\item[(AC)] all compact, and some neighborhood of $L$ in $M$ saturated by compact leaves of $\mathcal{V}$ may be diffeomorphically identified with the product $\R\times L$ in such a way that $\mathcal{V}$ corresponds to the foliation $\{\{s\}\times L\}_{s\in \R}$.
\end{itemize}} \]
The reason why we care about this property is that if $M$ is compact, $\mathcal{V}$ is transversely orientable and has the dichotomy property, and there is one compact leaf of $\mathcal{V}$ satisfying (AC), then there is a locally trivial fibration $M\to \esf^1$ whose fibers are the leaves of $\mathcal{V}$ \cite[Theorem 4.1]{DT_pems} --- the main argument justifying it goes back to Reeb.

In view of the above, there are two main steps to be carried out: \vspace{-1em}
\begin{equation}\label{steps}
\vspace{-1em}  \parbox{.9\textwidth}{\begin{enumerate}[(i)]
\item showing that $\mathcal{D}^\perp$ (when transversely orientable) has the dichotomy property, and
\item establishing the existence of a compact leaf of $\mathcal{D}^\perp$ satisfying (AC).
\end{enumerate}}
\end{equation}
While (\ref{steps}-i) does not rely on compactness of $M$, we could only achieve (\ref{steps}-ii) outside of the locally homogeneous case.

Let $(M,\g)$ be a compact rank-one ECS manifold. Replacing $(M,\g)$ with a suitable isometric double covering if needed, we may assume that $\mathcal{D}^\perp$ is transversely orientable. The crucial fact needed to conclude (\ref{steps}-i) is that whenever $L$ is a compact leaf of $\mathcal{D}^\perp$, some neighborhood $U$ of $L$ in $M$ can be diffeomorphically identified with a neighborhood $U'$ of the zero section $L$ in the line bundle $\mathcal{D}^*_L$ (the dual bundle of $\mathcal{D}$ restricted to $L$) in such a way that the distribution $\mathcal{D}^\perp$ on $U$ corresponds to the restriction to $U'$ of the horizontal distribution of the natural flat connection induced in $\mathcal{D}^*_L$ \cite[Theorem 10.1]{DT_pems}. Once this in place, the dichotomy property for $\mathcal{D}^\perp$ follows, with options (AC) and (NC) corresponding to whether the holonomy group of the natural flat connection in $\mathcal{D}^*_L$ is finite or infinite, respectively.

The argument for (\ref{steps}-ii) is more elaborate, and we need to consider cohomology classes of continuous $1$-forms. Namely, a continuous $1$-form is called \emph{closed} if it is locally given by the differential of a ${\rm C}^1$ function, in which case it indeed gives rise to a cohomology class (that is itself trivial if and only if a global ${\rm C}^1$ anti-derivative exists). The cohomology space $H^1_{\rm dR}(M)$ obtained in such a way can be identified with ${\rm Hom}(\pi_1(M),\R)$, as usual.

Fix the universal covering $(\widetilde{M},\tg)$ of $(M,\g)$, as well as a smooth function $t \colon \widetilde{M}\to \R$ whose parallel gradient spans $\mathcal{D}$, cf. \eqref{eqn:flat_D_Dquot} and \eqref{t}. Writing $M = \widetilde{M}/\Gamma$ for some group $\Gamma \cong \pi_1(M)$ (see Section \ref{subsec:td-dich}) and introducing the space $\mathcal{F}$ of all $\chi \in {\rm C}^0(\widetilde{M})$ for which $\chi\,{\rm d}t$ is closed and $\Gamma$-invariant, we may consider the linear operator $P\colon \mathcal{F} \to H^1_{\rm dR}(M)$ given by  $P\chi = [\chi\,{\rm d}t]$. Here, by $[\chi\,{\rm d}t]$ we mean the cohomology class of the $1$-form in $M$ whose pullback under the universal covering projection is $\chi\,{\rm d}t$. Using this operator $P$, we may show that if $(M,\g)$ is not locally homogeneous,
\begin{equation}\label{special_mu}
  \parbox{.78\textwidth}{there is a nonconstant $\mu\in {\rm C}^1(M)$ which is constant along $\mathcal{D}^\perp$,}
\end{equation}
cf. \cite[Theorem 9.1]{DT_pems}. More precisely, either $\dim \mathcal{F} < \infty$ and $(M,\g)$ is locally homogeneous, or $\dim \mathcal{F} = \infty$ and $\mu$ in \eqref{special_mu} exists --- the latter case is justified by choosing $\chi \in \ker P\smallsetminus \{0\}$ and letting $\mu\in {\rm C}^1(M)$ be an anti-derivative of the (now exact) $1$-form in $M$ induced by $\chi\,{\rm d}t$. The first case is ruled out due to set-theoretical reasons \cite[Lemma 3.3]{DT_pems} ultimately implying that $f(t) = \varepsilon(t-b)^{-2}$ for some $b\in \R$ and $\varepsilon = \pm 1$, for the function $f$ characterized by ${\rm Ric} = (2-n)f(t)\,{\rm d}t\otimes {\rm d}t$ in $\widetilde{M}$. (Such formula for $f$ implies local homogeneity of $(M,\g)$, by \cite[Theorem 7.3]{DT_pems}.)

With $\mu$ as in \eqref{special_mu} in hands, the conclusion of (\ref{steps}-ii) follows from Sard's theorem: the image of $\mu$ in $\R$ contains an open interval of regular values of $\mu$; any connected component of a level set $\mu^{-1}(c)$, with $c$ in a such open interval, is a compact leaf of $\mathcal{D}^\perp$ satisfying (AC). It is worth noting that while Sard’s theorem usually applies for a ${\rm C}^k$ function from an $n$-manifold into an $m$-manifold, where $k \geq \max\{n - m + 1, 1\}$, compactness of $\mu$ together with $\mu$ being locally a function of the real-valued function $t$ (which is without critical points) allows us to apply Sard with $n=1$ as opposed to $n\geq 4$, cf. \cite[Remark 9.2]{DT_pems}.

\section{Genericity and classification results}\label{sec:class_res}

The known classification results for compact rank-one ECS manifolds, which we briefly discuss\footnote{Most proof outlines are omitted due to space limitations.} in this final section, rely on a certain notion of \emph{genericity}, explained next.

Namely, if $(V,\langle\cdot,\cdot\rangle)$ is a pseudo-Euclidean vector space and $A\colon V\to V$ is traceless and self-adjoint, we say that $A$ is \emph{generic} if only finitely many isometries of $(V,\langle\cdot,\cdot\rangle)$ commute with $A$. For example, all diagonalizable operators with mutually distinct eigenvalues are generic. As shown in \cite[Section 4]{DT_agag}, every nonzero $A$ is generic when $\dim V = 2$, and generic endomorphisms form an open and dense subset of the space of all traceless and self-adjoint operators. Still at the linear algebra level, the algebraic structure of nilpotent generic operators is known \cite[Theorem 5.1]{DT_pm} and, in this case, for each $q\in (0,\infty)$ there are only two isometries $C,-C$ of ($V,\langle\cdot,\cdot\rangle)$ such that $CAC^{-1}=q^2A$ \cite[Corollary 5.3]{DT_pm}.

Proceeding, a rank-one ECS manifold $(M,\g)$ is \emph{generic} if the operator $A$ in \eqref{eqn:A} is generic in the sense described in the previous paragraph. When $(M,\g)$ is a standard ECS plane wave, this is the same as requiring the operator $A$ used to define it as in Section \ref{sec:models} to be generic. In particular, note that every four-dimensional rank-one ECS manifold is generic.

One of the main reasons genericity plays a central role in obtaining classifications results is given in \cite[Corollary D]{DT_agag}:

\begin{theorem}
  The universal covering of a generic compact rank-one ECS manifold is globally isometric to a standard ECS plane wave.
\end{theorem}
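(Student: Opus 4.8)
\noindent\emph{Proof proposal.}\ The plan is to produce, on the universal covering $(\widetilde{M},\tg)$ of $(M,\g)$, a global coordinate system identifying it with a standard ECS plane wave \eqref{eqn:model}. Replacing $(M,\g)$ by a double covering if needed (which does not alter $\widetilde{M}$), assume $\mathcal{D}$ is globally trivialized and $\mathcal{D}^\perp$ is transversely orientable. Fix, as in \eqref{t}, a smooth $t\colon\widetilde{M}\to I$ surjective onto an open interval $I\subseteq\R$ whose gradient $\xi=\nabla t$ is null, parallel, and spans $\widetilde{\mathcal{D}}$; this uses flatness of $\mathcal{D}$ (valid as $(M,\g)$ has rank one, cf.\ \eqref{eqn:flat_D_Dquot}) and simple connectivity. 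Being parallel, $\xi$ vanishes nowhere, so $t$ is a submersion, $\widetilde{\mathcal{D}}^\perp=\ker{\rm d}t$, and — since $\xi(t)=\tg(\xi,\xi)=0$ — the flow of $\xi$ preserves each leaf of $\widetilde{\mathcal{D}}^\perp$ and spans $\widetilde{\mathcal{D}}$ along it; compactness of $M$ makes this flow complete. I would then invoke the bundle structure result, Theorem~\ref{teo:bundle_pems}: away from the locally homogeneous case (which reduces to the explicit form $f(t)=\varepsilon(t-b)^{-2}$, yielding a compact leaf, or is handled directly), $M$ fibers locally trivially over $\esf^1$ with the leaves of $\mathcal{D}^\perp$ as fibers. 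Pulling this back, $t\colon\widetilde{M}\to I$ becomes, after reparametrizing $I$, a locally trivial fibration whose fibers are the leaves of $\widetilde{\mathcal{D}}^\perp$; moreover, since $\pi_2(\esf^1)=0$, the inclusion of a fiber $F$ of $M\to\esf^1$ induces an injection $\pi_1(F)\hookrightarrow\pi_1(M)$, so these leaves are the universal coverings of $F$, hence \emph{connected and simply connected}.

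Next I would assemble adapted global coordinates. The flat bundle $\widetilde{\mathcal{D}}^\perp/\widetilde{\mathcal{D}}$ over the simply connected $\widetilde{M}$ is trivial, $\widetilde{M}\times V$ with $(V,\langle\cdot,\cdot\rangle)$ the inner-product space of its parallel sections; integrating, along each (simply connected) leaf $L$, the tautological $V$-valued $1$-form on $\widetilde{\mathcal{D}}^\perp$ gives a local diffeomorphism $v\colon L\to V$ annihilating $\xi$. Together with the flow time $s$ of $\xi$ measured from a hypersurface meeting each $\xi$-orbit once, and a transversal to the leaves chosen \`a la Walker, this produces a local isometry $(t,s,v)$ from $\widetilde{M}$ onto $(I\times\R\times V,\ \kappa\,{\rm d}t^2+{\rm d}t\,{\rm d}s+\langle\cdot,\cdot\rangle)$ for some function $\kappa$. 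Promoting this to a global diffeomorphism — $v$ surjective onto $V$ along every leaf, and the $\xi$- and transverse flows sweeping out the full product — is where compactness of $M$ re-enters, through completeness of the leafwise flat/affine structures and of the flows.

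It remains to identify $\kappa$. By the Walker-adapted construction and the parallelism of $\partial_s=\xi$, the metric has no ${\rm d}s^2$, ${\rm d}s\,{\rm d}v^j$, or ${\rm d}t\,{\rm d}v^j$ terms and the ${\rm d}t\,{\rm d}s$-coefficient is constant (normalized to $1$); because the $v^j$ come from parallel sections of $\widetilde{\mathcal{D}}^\perp/\widetilde{\mathcal{D}}$ the $v$-block is the constant pairing $\langle\cdot,\cdot\rangle$, so $\tg=\kappa\,{\rm d}t^2+{\rm d}t\,{\rm d}s+\langle\cdot,\cdot\rangle$ on the nose. Finally, the local structure theorem for rank-one ECS manifolds says every point of $\widetilde{M}$ has a neighborhood isometric to an open subset of some model \eqref{eqn:model}; matching that chart with ours (they differ by an element of $\mathrm{S}\ltimes\mathrm{H}$, cf.\ Theorem~\ref{lem:full_iso_model}, which preserves the shape of $\kappa$) forces $\kappa=f(t)\langle v,v\rangle+\langle Av,v\rangle$ locally, hence globally, with $A$ as in \eqref{eqn:A} and $f$ as in \eqref{eqn:Ric_f}. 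Thus $(\widetilde{M},\tg)$ is globally a standard ECS plane wave, geodesically complete exactly when $I=\R$ by \eqref{eqn:completeness_model}.

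The principal obstacle is precisely the local-to-global passage in the middle step: upgrading the leafwise developing maps $v$ and the transverse flows to genuine diffeomorphisms onto $V$ and onto $I\times\R\times V$. Both hypotheses are indispensable here. Compactness of $M$ supplies completeness of $\xi$, of the transverse flat structures on the (possibly noncompact) leaves, and — via Theorem~\ref{teo:bundle_pems} — the simple foliation with simply connected leaves. Genericity of $A$ guarantees that the manifold genuinely has rank one with $\widetilde{\mathcal{D}}$ equal to the radical of the leaf metric, so that the local model \eqref{eqn:model} applies without correction, and it confines the orthogonal gauge freedom in the $v$-coordinates to the finite group of isometries of $(V,\langle\cdot,\cdot\rangle)$ commuting with $A$; this finiteness is what forces the charts of the local structure theorem to patch across $\widetilde{M}$ without monodromy, so that a single global chart — and hence the asserted global isometry — actually exists. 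Without genericity the orthogonal holonomy of the $v$-frame along leaves could obstruct this.
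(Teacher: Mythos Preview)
The paper itself does not supply a proof of this statement; Section~\ref{sec:class_res} merely quotes it as \cite[Corollary D]{DT_agag}, with the footnote that most proof outlines are omitted. So there is nothing in-paper to compare against directly, but your outline has two concrete gaps worth naming.

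First, your appeal to Theorem~\ref{teo:bundle_pems} does not close in the locally homogeneous case. The paper flags exactly this as open (item (v) in the Introduction and the last sentence of Theorem~\ref{teo:bundle_pems}), and your parenthetical ``yielding a compact leaf, or is handled directly'' is the missing step, not an argument: knowing that $f(t)=\varepsilon(t-b)^{-2}$ does not by itself produce a compact leaf of $\mathcal{D}^\perp$. Nor can you invoke Theorem~\ref{thm:agag23} to exclude local homogeneity here, since that is Theorem~E of \cite{DT_agag} while the statement at hand is its Corollary~D --- logically prior in that paper.

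Second, your account of what genericity buys is off. On the simply connected $\widetilde{M}$ there is no monodromy obstruction to patching charts; a developing map into the local model exists by analytic continuation regardless of $A$. The genuine question is whether that map is a \emph{global diffeomorphism} --- injectivity and surjectivity onto $I\times\R\times V$ --- and ``only finitely many $C\in{\rm O}(V,\langle\cdot,\cdot\rangle)$ commute with $A$'' is not a mechanism that forces completeness of the leafwise flat structure or of the transverse flow. (Also, rank one is already in the hypotheses, so genericity is not needed to make the local model \eqref{eqn:model} apply; cf.\ \eqref{eqn:rank-correction}.) In \cite{DT_agag} the role of genericity is to constrain the factor $\mathrm{S}$ in ${\rm Iso}(\widehat{M},\hg)\cong\mathrm{S}\ltimes\mathrm{H}$ and hence the possible $\Gamma$-actions, interacting with Theorem~\ref{thm:pm23}; it is not a chart-gluing device, and your local-to-global step --- which you correctly identify as the crux --- remains essentially unargued.
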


This result had been proved in the Lorentzian case by Schliebner, in the preprint \cite{S_arxiv}, without the genericity assumption. With the aid of an impossible combinatorial structure \cite[Section 8]{DT_pm}, similar in spirit to the $\Z$-spectral systems mentioned in Section \ref{dil_const}, we show that in the generic case, the image of (\ref{eqn:two_homs}-ii) is not infinite cyclic, leading to \cite[Theorem C]{DT_pm}:

\begin{theorem}\label{thm:pm23}
  A generic compact rank-one ECS manifold is either translational, or locally homogeneous.
\end{theorem}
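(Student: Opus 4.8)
The plan is to leverage the dichotomy in \eqref{eqn:dich-q}: a rank-one ECS manifold is either translational or dilational, according to whether the holonomy group of the flat connection induced in $\mathcal{D}$ is finite or infinite, the latter being the image of the homomorphism (\ref{eqn:two_homs}-ii). So it suffices to show that if a generic compact rank-one ECS manifold $(M,\g)$ is dilational, then it must be locally homogeneous. Writing $M=\widetilde{M}/\Gamma$ as in Section \ref{subsec:td-dich}, being dilational means the image of (\ref{eqn:two_homs}-ii) is a nontrivial subgroup of $\R\smallsetminus\{0\}$; by compactness of $M$ this group is finitely generated, hence finitely generated as an abelian group. First I would invoke the operator $A\colon V\to V$ from \eqref{eqn:A} together with the conjugation relation $CAC^{-1}=q^2A$ for the isometry $C$ of $(V,\langle\cdot,\cdot\rangle)$ induced by each $\gamma\in\Gamma$, exactly as in the proof of Proposition (i): when $(M,\g)$ is dilational, $A$ is conjugate to a nontrivial multiple of itself, forcing $A$ to be \emph{nilpotent}. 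So the genericity hypothesis now applies to a \emph{nilpotent} generic $A$, whose algebraic structure is completely pinned down by \cite[Theorem 5.1]{DT_pm}.

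The heart of the argument is to show the image of (\ref{eqn:two_homs}-ii) cannot be infinite cyclic. Here I would use the rigidity coming from \cite[Corollary 5.3]{DT_pm}: for a nilpotent generic $A$ and each $q\in(0,\infty)$, the only isometries $C$ of $(V,\langle\cdot,\cdot\rangle)$ with $CAC^{-1}=q^2A$ are $\pm C$ for a single $C$. Combined with the ODE-theoretic description of how $q$ controls the spectrum of the induced action on $\mathcal{E}$ (as in the dilational construction of Section \ref{dil_const}), the constraint $CAC^{-1}=q^2A$ together with the invariance relation $q^2f(qt+p)=f(t)$ from \eqref{eqn:Ric-inv} severely restricts which values of $q$ can appear. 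The claim to establish is that these constraints are incompatible with the image of (\ref{eqn:two_homs}-ii) being isomorphic to $\Z$: this is where the ``impossible combinatorial structure'' of \cite[Section 8]{DT_pm} enters --- one assembles, from a hypothetical generator $q_0$ of the image, a finite configuration of exponents (eigenvalues of powers of the relevant isometry acting on $\mathcal{E}$, organized by the symplectic pairing $\Omega$) that must satisfy a system of additive/symmetry conditions analogous to \eqref{Z-spec}, and shows no such configuration exists. So the image of (\ref{eqn:two_homs}-ii), being a nontrivial finitely generated subgroup of $(\R_{>0},\cdot)$ that is not infinite cyclic, must contain a copy of $\Z^2$ or more --- in particular it is not discrete, hence dense, in $\R_{>0}$.

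Having ruled out the infinite cyclic case, I would conclude by showing that density of the holonomy group forces local homogeneity. The idea: by \eqref{eqn:Ric-inv}, the function $f$ characterizing ${\rm Ric}=(2-n)f(t)\,{\rm d}t\otimes{\rm d}t$ satisfies $q^2f(qt+p)=f(t)$ for a dense set of scaling parameters $q$ (with appropriate $p$); pushing this invariance through a density argument, $f$ is forced into the special form $f(t)=\varepsilon(t-b)^{-2}$ for some $b\in\R$ and $\varepsilon=\pm1$ --- this is precisely the scaling-homogeneous profile appearing in \eqref{f} and in the discussion of \eqref{special_mu}, and by \cite[Theorem 7.3]{DT_pems} such an $f$ implies $(M,\g)$ is locally homogeneous. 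Thus either the holonomy group is finite and $(M,\g)$ is translational, or it is nontrivial and (by the combinatorial obstruction) not infinite cyclic, hence dense, and then $(M,\g)$ is locally homogeneous.

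\textbf{Main obstacle.} The genuinely hard step is the combinatorial exclusion of the infinite cyclic case: verifying that the exponent configuration forced by a putative generator $q_0$ of the holonomy group --- subject to the nilpotency-driven rigidity of $A$, the relation $CAC^{-1}=q^2A$, and the symplectic pairing constraints on $\mathcal{E}$ --- cannot be realized, which is the content of the ``impossible combinatorial structure'' argument of \cite[Section 8]{DT_pm}. Everything else (nilpotency of $A$ in the dilational case, the rigidity corollary \cite[Corollary 5.3]{DT_pm}, and the density-to-local-homogeneity passage via \cite[Theorem 7.3]{DT_pems}) is either already available in the excerpt or a routine adaptation of arguments used in the proof of the Proposition in Section \ref{subsec:td-dich}.
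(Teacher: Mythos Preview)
Your proposal is correct and follows the same route the paper sketches: reduce to the dilational case (so $A$ is nilpotent), use the rigidity of \cite[Corollary 5.3]{DT_pm} for generic nilpotent $A$, invoke the impossible combinatorial structure of \cite[Section 8]{DT_pm} to exclude an infinite cyclic image for (\ref{eqn:two_homs}-ii), and then --- as you correctly spell out where the exposition is silent --- pass from density of the resulting holonomy subgroup of $\R_{>0}$ to the scaling profile $f(t)=\varepsilon(t-b)^{-2}$ and hence to local homogeneity via \cite[Theorem 7.3]{DT_pems}. The hard step you flag (the combinatorial exclusion) is exactly the one the paper singles out.
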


In other words, a generic compact rank-one ECS manifold is dilational if and only if it is locally homogeneous. The locally homogeneous alternative itself, in turn, had to be ruled out with a different array of arguments, relying on particular features of the isometry group of a homogeneous standard ECS plane wave \cite[Theorem E]{DT_agag}:

\begin{theorem}\label{thm:agag23}
  All generic compact rank-one ECS manifolds are translational, complete, and not locally homogeneous. 
\end{theorem}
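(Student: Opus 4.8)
The plan is to combine Theorem~\ref{thm:pm23} with a separate argument that excludes the locally homogeneous (equivalently, by Theorem~\ref{thm:pm23}, dilational) possibility for generic compact rank-one ECS manifolds. By Theorem~\ref{thm:pm23}, such a manifold $(M,\g)$ is either translational or locally homogeneous, so it suffices to show that a generic compact rank-one ECS manifold cannot be locally homogeneous; once translationality is established, completeness follows from \eqref{eqn:completeness_model} (a translational $\sigma$ forces $I=\R$), and non-local-homogeneity is then immediate as the two remaining alternatives in Theorem~\ref{thm:pm23} are exclusive of each other in the generic complete case. Actually, the cleaner logical route is: first rule out local homogeneity directly, and then Theorem~\ref{thm:pm23} hands us translationality for free.

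To rule out local homogeneity, I would argue by contradiction, starting from a locally homogeneous generic compact rank-one ECS manifold $(M,\g)$. Passing to the universal cover $(\widetilde{M},\tg)$ and invoking the rank-one local structure theorem \cite[Theorem 4.1]{DR_bbms} (cf. Section~\ref{sec:models}), realize $\widetilde{M}$ as (an open subset of, but in the complete locally homogeneous case all of) a standard ECS plane wave $(\widehat M,\hg)$ with data $(V,\langle\cdot,\cdot\rangle)$, $A$ generic, $I$, and $f$. Local homogeneity pins down $f$: as recalled in the proof of the Proposition in Section~\ref{subsec:td-dich} and in \cite[Theorem 7.3]{DT_pems}, local homogeneity forces $(|f|^{-1/2})^{\boldsymbol\cdot\boldsymbol\cdot}=0$, so after an affine reparametrization of $t$ one has $f(t)=f(1)/t^2$ on $I=(0,\infty)$, i.e. $\sigma$ is forced to be dilational. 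Now write $M=\widehat M/\Gamma$ with $\Gamma\cong\pi_1(M)$ acting by deck isometries, and use the description of ${\rm Iso}(\widehat M,\hg)=\mathrm{S}\ltimes_\rho\mathrm{H}$ from Theorem~\ref{lem:full_iso_model}: each $\gamma\in\Gamma$ has an $\mathrm{S}$-component $\sigma_\gamma=(q_\gamma,p_\gamma,C_\gamma)$, and the map $\gamma\mapsto q_\gamma$ is exactly (\ref{eqn:two_homs}-ii), whose image is the holonomy group of the flat connection in $\mathcal{D}$. Since $(M,\g)$ is dilational (by Theorem~\ref{thm:pm23}, or by the forced form of $f$), this image is infinite; I then want to show, using genericity, that it must in fact be infinite cyclic, and then derive a contradiction from the impossible combinatorial structure of \cite[Section 8]{DT_pm}. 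Concretely: genericity of the nilpotent operator $A$ (nilpotency of $A$ in the dilational case was shown in the Proposition of Section~\ref{subsec:td-dich}) means, via \cite[Theorem 5.1]{DT_pm} and \cite[Corollary 5.3]{DT_pm}, that for each $q>0$ there are only the two isometries $\pm C$ with $CAC^{-1}=q^2A$; this rigidly constrains the $\mathrm{S}$-component of $\Gamma$ and forces the holonomy image to be generated (up to sign/index two) by a single $q_0>1$, i.e. infinite cyclic. Feeding ``image of (\ref{eqn:two_homs}-ii) is infinite cyclic'' into the combinatorial obstruction of \cite[Section 8]{DT_pm} yields the contradiction, completing the proof that $(M,\g)$ is not locally homogeneous, whence by Theorem~\ref{thm:pm23} it is translational, and by \eqref{eqn:completeness_model} complete.

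The main obstacle is the step that converts genericity into the statement ``the holonomy image of $\mathcal{D}$ is infinite cyclic,'' and then feeds this into a purely combinatorial impossibility. This is precisely the hardest part of \cite{DT_pm}: one needs to control not just the abstract group $\Gamma$ but its full image in $\mathrm{S}$, keep track of how the $C$-components act on the symplectic space $(\mathcal{E},\Omega)$ and on any $\Gamma$-invariant lattice data (as in Lemma~\ref{lem:technical}), and show that compactness together with an infinite non-cyclic holonomy would force an arithmetic configuration of eigenvalue exponents that cannot occur — this is the role of the ``impossible combinatorial structure,'' analogous to the $\Z$-spectral systems of Section~\ref{dil_const} but used here in the reverse direction, as a non-existence device. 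A secondary technical point is making sure the passage from the abstract locally homogeneous rank-one ECS manifold to a \emph{global} standard ECS plane wave as universal cover is legitimate; this is handled by the completeness of compact pp-wave-type manifolds (the global isometry statement in the generic case is the theorem just above Theorem~\ref{thm:pm23}) together with \eqref{eqn:completeness_model}. Everything else — the forced form $f(t)=f(1)/t^2$, nilpotency of $A$ in the dilational case, the semidirect-product bookkeeping — is routine given the results already assembled in Sections~\ref{subsec:td-dich}--\ref{sec:cpct_ex}.
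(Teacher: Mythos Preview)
Your overall strategy matches the paper's: invoke Theorem~\ref{thm:pm23} to reduce to excluding the locally homogeneous case, then conclude translationality and (via \eqref{eqn:completeness_model}) completeness. The gap is in how you exclude local homogeneity.

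You argue that genericity of the nilpotent operator $A$, via \cite[Corollary 5.3]{DT_pm}, forces the image of (\ref{eqn:two_homs}-ii) to be infinite cyclic, and then feed this into the combinatorial obstruction of \cite[Section 8]{DT_pm}. But the first inference does not go through. Genericity tells you that for each fixed $q>0$ the isometry $C$ with $CAC^{-1}=q^2A$ is unique up to sign; combined with $p=0$ (forced by $f(t)=c/t^2$), this pins down the $\mathrm{S}$-component of each $\gamma\in\Gamma$ in terms of $q_\gamma$. It does \emph{not} constrain which values $q_\gamma$ actually occur. In the case that is not locally homogeneous, the functional equation $q^2f(qt+p)=f(t)$ itself forces the $(q,p)$-image to be discrete --- hence, when dilational, infinite cyclic --- and it is precisely this discreteness that the combinatorial obstruction exploits. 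When $f(t)=c/t^2$, however, \emph{every} $q>0$ satisfies the functional equation, $\mathrm{S}$ acquires a full one-parameter subgroup, and there is no reason for the projection of the discrete group $\Gamma\subset \mathrm{S}\ltimes\mathrm{H}$ onto $(0,\infty)$ to be discrete; discreteness of a subgroup of a semidirect product does not descend to the quotient factor. Your step claiming the holonomy image is infinite cyclic is therefore unjustified, and this is exactly why the paper says the locally homogeneous alternative ``had to be ruled out with a different array of arguments, relying on particular features of the isometry group of a homogeneous standard ECS plane wave'' rather than by reusing \cite[Section 8]{DT_pm} --- which is also why Theorem~\ref{thm:pm23} stops short of the full conclusion.
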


While Theorem \ref{thm:agag23} improves Theorem \ref{thm:pm23}, it does not replace it: the latter is crucially used to prove the former.

We conclude with one important consequence of Theorem \ref{thm:agag23}. If $(M,\g)$ is a compact four-dimensional rank-one ECS manifold then, being generic, it must be translational and have a standard ECS plane wave as its universal covering. Replacing $(M,\g)$ with a suitable isometric finite covering, we may assume that (\ref{eqn:two_homs}-i) becomes a homomorphism $\Gamma\ni \gamma \mapsto p\in \R$ (whose image is infinite cyclic), and that $\Gamma$ acts trivially on $(V,\langle\cdot,\cdot\rangle)$. It follows that $\Gamma\subseteq {\rm G}(\sigma)$ for some $\sigma \in {\rm S}$ of the form $\sigma = (1,p,{\rm Id}_V)$. With a direct adaptation of \cite[Lemma 8.1]{DR_agag}, we may apply Theorem \ref{teo:criterion_compact} to obtain our last result \cite[Corollary F]{DT_agag}:

\begin{corollary}
  There are no four-dimensional compact rank-one ECS manifolds.
\end{corollary}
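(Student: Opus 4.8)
The plan is to suppose, for contradiction, that a four-dimensional compact rank-one ECS manifold $(M,\g)$ exists, and to run it through the structural machinery assembled above. First I would invoke genericity for free: since $\dim V = n-2 = 2$, every nonzero traceless self-adjoint $A$ on $(V,\langle\cdot,\cdot\rangle)$ is generic, so $(M,\g)$ is automatically a generic rank-one ECS manifold. Theorem~\ref{thm:agag23} then applies and tells us that $(M,\g)$ is translational, complete, not locally homogeneous, and (via the preceding theorem in the excerpt) that its universal covering $(\widetilde M,\tg)$ is globally isometric to a standard ECS plane wave $(\widehat M,\hg)$ of the form \eqref{eqn:model}, with $I = \R$ by completeness and \eqref{eqn:completeness_model}. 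Write $M = \widehat M/\Gamma$ with $\Gamma \cong \pi_1(M)$ acting by deck isometries, so $\Gamma \leq {\rm Iso}(\widehat M,\hg) \cong {\rm S}\ltimes_\rho {\rm H}$ by Theorem~\ref{lem:full_iso_model}.

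Next I would normalize $\Gamma$ so that it lands inside one of the groups ${\rm G}(\sigma)$ to which Theorem~\ref{teo:criterion_compact} applies. Because $(M,\g)$ is translational, the homomorphism (\ref{eqn:two_homs}-ii) has image in $\{\pm1\}$; passing to a suitable isometric finite covering of $M$ (which changes nothing, as compactness, rank, and the ECS property are all covering-stable), I may assume the (\ref{eqn:two_homs}-i) homomorphism $\Gamma \to {\rm Aff}(\R)$ has image an infinite cyclic group of genuine translations $\gamma\mapsto p$, and simultaneously (since only finitely many isometries of the Euclidean plane $(V,\langle\cdot,\cdot\rangle)$ commute with the nonzero self-adjoint $A$, and these form a finite group) that the ${\rm O}(V,\langle\cdot,\cdot\rangle)$-component of every element of $\Gamma$ is trivial. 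Then $\Gamma \subseteq {\rm G}(\sigma)$ for the translational element $\sigma = (1,p,{\rm Id}_V) \in {\rm S}$, with $I = \R$, exactly as in \eqref{eqn:td-dich}. Picking any $\gamma = (\sigma,b,w) \in \Gamma$ with nonzero ${\rm Aff}(\R)$-shift, condition (a) of Theorem~\ref{teo:criterion_compact} holds, so condition (b) holds as well: there is a $\sigma$-invariant first-order subspace $\mathcal{L} \subseteq (\mathcal{E},\Omega)$ together with a ${\rm C}_\gamma$-invariant lattice $\Sigma \subseteq \R\times\mathcal{L}$ and the associated $\theta$ and $\Lambda$.

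Now I would extract the contradiction from the low-dimensional arithmetic, following the direct adaptation of \cite[Lemma 8.1]{DR_agag} referenced in the excerpt. In dimension four we have $n-2 = 2$, so $\dim_\R \mathcal{E} = 2(n-2) = 4$ and a first-order subspace $\mathcal{L}$ is two-dimensional; the restriction $\sigma|_{\mathcal{L}}$ is an automorphism of $\mathcal{L}$ preserving the lattice $\Lambda$ spanned by the lattice $\Sigma \subseteq \R\times\mathcal{L}$, hence $\sigma|_{\mathcal{L}} \in {\rm GL}(\Lambda) \cong {\rm GL}_2(\Z)$ up to the choice of a $\Z$-basis. On the other hand Lemma~\ref{lem:first-order-subspaces}(iii) gives the explicit value $\det(\sigma|_{\mathcal{L}}) = (\det {\rm Id}_V)\exp(-\int_0^{p}{\rm tr}(B(t))\,{\rm d}t)$, and one uses the relation $\dot B + B^2 = f + A$ to compute ${\rm tr}(B)$, together with the fact that ${\rm tr}(\dot B + B^2)$ is nonconstant (the metric is not locally symmetric) and the traceless part $A$ is a nonzero constant (the metric is not conformally flat). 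The point is that being translational, complete, and non-locally-homogeneous pins down the eigenvalue structure of $\sigma|_{\mathcal{L}}$ tightly enough that no integer matrix can realize it — the eigenvalues would have to be positive reals $\neq 1$ satisfying the forbidden pattern $\{\lambda,\lambda^{-1}\}$ of \eqref{arit_prop} for a $2\times 2$ integer matrix of the needed determinant, which is impossible while $\mathcal{L}$ (being Lagrangian, forced by $\theta$ and the self-adjointness of $B$) also constrains the spectrum. Assembling these numerical restrictions yields the contradiction, so no such $(M,\g)$ exists.

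\textbf{Main obstacle.} The delicate part is the last step: translating the geometric conclusions "translational, complete, not locally homogeneous" into a clean spectral statement about $\sigma|_{\mathcal{L}}$ and then showing this is incompatible with $\sigma|_{\mathcal{L}}$ preserving a rank-two lattice. This is where the determinant formula \eqref{eq:detT}, the structure of first-order subspaces, and the arithmetic condition \eqref{arit_prop} must be combined carefully — it is precisely the content of the adaptation of \cite[Lemma 8.1]{DR_agag}, and the rest of the argument is bookkeeping around it.
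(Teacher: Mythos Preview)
Your outline is correct and follows essentially the same route as the paper: automatic genericity in dimension four (since $\dim V=2$), then Theorem~\ref{thm:agag23} together with the universal-cover theorem to land $\Gamma$ inside some ${\rm G}(\sigma)$ with translational $\sigma=(1,p,{\rm Id}_V)$ after a finite cover, and finally Theorem~\ref{teo:criterion_compact} combined with the adaptation of \cite[Lemma~8.1]{DR_agag} for the contradiction. One small caution: your description of the endgame is slightly garbled --- a $2\times 2$ integer matrix of determinant $1$ certainly \emph{can} have positive real eigenvalues $\{\lambda,\lambda^{-1}\}$ with $\lambda\neq 1$, so that pattern alone is not ``impossible''; the actual obstruction coming from \cite[Lemma~8.1]{DR_agag} is finer than what you sketched --- but since both you and the paper explicitly defer that step to the cited lemma, this does not affect the validity of the overall plan.
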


%
%

\end{document}